\newtheorem{theorem}{Theorem}
\newtheorem{corollary}{Corollary}
\newtheorem{lemma}{Lemma}
\newtheorem{proposition}{Proposition}
\newtheorem{definition}{Definition}
\def\R{{\mathbb R}}
\def\vol{{\rm vol}}
\def\1{\mathds{1}}
\def\be{\begin{equation}}
\def\ee{\end{equation}}
\def\Hess{{\nabla^2}}
\def\grad{\nabla}
\begin{document}

\title{Floating functions 
\footnote{Keywords:  2010 Mathematics Subject Classification: 52A20, 53A15 }}

\author{Ben Li, Carsten Sch\"utt  and Elisabeth M. Werner
\thanks{Partially supported by  NSF grant DMS-1504701}}

\date{}

 \maketitle

\begin{abstract}

We introduce floating bodies for convex,  not necessarily bounded subsets of $\mathbb{R}^n$. This allows us to define floating 
functions for  convex and log concave functions  and  log concave measures.   We establish the
asymptotic behavior of the integral  difference  of a log concave function  and its floating function.
This  gives rise to a new affine invariant  which bears
striking similarities to the Euclidean affine surface area.

\end{abstract}
\vskip 3mm
\section { Introduction}

Two important closely related notions in affine convex geometry are the floating
body and the affine surface area of a convex body.
The floating body of a convex body is obtained by cutting off caps 
of volume less or equal to a fixed positive constant $\delta$.
Taking the right-derivative of the volume of the floating body gives rise to the affine surface area. 
This was established for all convex bodies in all dimensions by Sch\"utt and Werner
in \cite{SchuettWerner1990}.

The affine surface area was introduced by Blaschke in 1923 \cite{Blaschke:1923}.
Due to its important properties, which make it an effective and powerful tool, it
is omnipresent in geometry.
The affine surface area and its generalizations in the rapidly developing $L_p$ and
Orlicz Brunn--Minkowski theory are the focus of intensive investigations (see e.g.,\
\cite{Gardner:2014, Lutwak:1996, SchuettWerner2004,  
WernerYe2010}) 
and have proven to be a valuable tool in a variety of settings, 
among them solutions  for the affine Bernstein and  Plateau problems
by Trudinger and Wang  \cite {TW1,TW2, TW4}.
Totally new  connections opened up 
to  e.g., PDEs and ODEs (see e.g.,  the papers  
\cite{BoeroetzkyLutwakYangZhang2013,  HuangLutwakYangZhang,  LuYZh3, LuYZh5}, lattice polytopes \cite{BoeroetzkyLudwig} and to  concentration of 
volume, e.g., \cite{FGP,  Ludwig2003}. 
\par
A first characterization of affine surface area was achieved by Ludwig and Reitzner
\cite{Ludwig:2010} and had a profound impact on valuation theory of convex bodies. 
That started a line of research 
(see e.g.,  \cite{ Haberl:2012, Ludwig:2010c,Parapatits:2013, 
 Schuster2010})
leading up to the  recent characterization of all centro-affine valuations by Haberl
and Parapatits \cite{Haberl:2014}.

There is a natural inequality associated with affine surface area, the affine isoperimetric
inequality, which states that among all convex bodies, with fixed volume, affine surface 
area is maximized for ellipsoids. 
This inequality has sparked interest into affine isoperimetric inequalities with a 
multitude of results and proved to be
the key ingredient in many problems
(see e.g., \ \cite{Haberl:2009a, Lutwak:2000, Lutwak:2002, 
Ye:2015, Zhang:1999}).
 In particular, it was used to show uniqueness of
self-similar solutions of the affine curvature flow and to study
its asymptotic behavior  by Sapiro \& Tannenbaum \cite{ST1} and by  Andrews \cite{Andrews:1999, An}.

There are numerous other applications for affine surface area, such as, 
the approximation theory of convex bodies by polytopes 
\cite{Boeroeczky:2000a, GroteWerner, PaourisWerner2013,  Reitzner:2002,  SchuettWerner2003}, 
affine curvature flows \cite{Andrews:1999, Ivaki:2013a,Ivaki:2015, SA1, SA2},
information theory \cite{ArtsteinKlartagSchuettWerner,  CFGLSW, CaglarWerner2014,  CaglarWerner2015, LYZ6, PaourisWerner2012, Werner:2012} and 
partial differential equations \cite{Lutwak:1995}.
Very recent developments are the introduction of floating bodies in spherical and hyperbolic space \cite{BesauWerner1, BesauWerner2}. This has already led to applications in approximation of convex bodies by polytopes \cite{BLW:2016}.

\vskip 3mm

\par
The study of log-concave functions is  a natural  extension of convexity theory.
One of the most important discoveries in   recent investigations  in this direction is the functional version of the famous Blaschke-Santal\'o inequality  
\cite{ArtKlarMil, ArtMil, KBallthesis, 
FradeliziMeyer2007, Lehec2009b}.
\par
Here we introduce floating bodies for unbounded convex sets in the same way as for bounded sets by cutting off sets of fixed volume $\delta$. 
We apply that to the epigraph of a convex function,  which is an unbounded convex set. This allows us to define floating functions $\psi_\delta$ for convex functions $\psi$, 
and consequently  for  log concave functions $f$  which are of the form $f=e^{-\psi}$, where $\psi$ is convex. Namely we put $f_\delta= e^{-\psi_\delta}$.
\par
Taking the right-derivative of the integral  of the floating function $f_\delta$ (see below for the definition) of a log concave function $f=e^{-\psi}$ gives rise to a new affine invariant for log concave functions.
This is the content of our main result which reads as follows. There, $\nabla^2 \psi$ denotes the Hessian of the convex function $\psi$.
\par
\noindent
{\bf Theorem 1.}\label{}
{\em Let  $\psi: \R^n \rightarrow \R$ be a  convex function such that 
$0<\int _{\mathbb R^{n}} e^{-\psi(x)}   dx  < \infty $. Let $c_{n+1} =  \frac{1}{2} \left(\frac{n+2}{\vol_{n} (B^{n}_2)}\right)^\frac{2}{n+2}$.
Then 
$$  
\lim _{\delta \rightarrow 0} \frac{ \int _{\mathbb R^{n}}(e^{-\psi(x)}  - e^{-\psi_{\delta}(x)} )  \  dx } {\delta^{2/(n+2)}} =  c_{n+1} \int_{\mathbb R^{n}} \left(\det\left(\nabla^2 \psi (x) \right)\right)^\frac{1}{n+2} \  e^{-\psi(x)} dx.
$$}

The comparison with convex bodies  leads us  to call  the right hand side of this theorem the affine surface area of the log concave function $f$,
$$
as(f) = \int_{\mathbb R^{n}} \left(\det\left( \nabla^2 \psi (x)\right)\right)^\frac{1}{n+2} \  e^{-\psi(x)} dx.
$$
This is further justified as the expression shares many properties of the affine surface area for convex bodies. It is invariant under affine transformations with determinant $1$ and it is a valuation (see below). A slightly different definition  of affine surface area for log concave functions  has been suggested in  \cite{CFGLSW}.   Both coincide in many cases. We compare the two definitions in
section 3.

We lay the foundation for further investigations of floating functions and the
affine surface  area of log concave functions. The authors believe that
both notions are of interest in its own right and will in particular be useful
for applications, such as, isoperimetric inequalities.

\section{Floating functions and floating sets}

Throughout the paper we will use the following notation.
We denote by $B^{n}_2(x,r)$ the $n$-dimensional closed Euclidean  ball centered at $x$ with radius $r$. We write in short $B^{n}_2=B^{n}_2(0,1)$ for the  Euclidean unit ball centered at $0$.
A convex body $K$ in $\mathbb{R}^n$ is a convex compact subset of $\mathbb{R}^n$ with non-empty interior.  $\partial K$ denotes the boundary of $K$ and $S^{n-1} = \partial B^n_2$.
Finally,  $c,  c_0, c_1$ denote absolute constants that may change from line to line.

For background on convex bodies we refer to the books \cite{GardnerBook, SchneiderBook} and for  background on convex function to \cite{Rockafellar, RockafellarWets}.
\par
\noindent
We first recall the definition of the floating body \cite{BaranyLarman:1988, SchuettWerner1990} .
\par
\noindent
Let $H$ be a hyperplane. Then there is  $u \in S^{n-1} $ and $a \in \mathbb{R}$ such that $H =\{x \in \mathbb{R}^n: \langle x, u\rangle  = a\}$. 
 $H^+ =\{x \in \mathbb{R}^n: \langle x, u \rangle \geq  a\}$ and $H^-=\{x \in \mathbb{R}^n: \langle x, u \rangle  \leq  a\}$ are the two closed half spaces determined by $H$. The hyperplane passing through the point $x$ and 
being orthogonal to the vector $\xi$ is denoted by $H(x,\xi)$.
\par
Let $K$ be a convex body in $\mathbb{R}^n$ and let $\delta >0$.   
Then the {\em (convex) floating body }
$K_{\delta}$  
\cite{SchuettWerner1990} of $K$ is
the intersection of all halfspaces $H^+$ whose
defining hyperplanes $H$ cut off  sets of volume at most $\delta$
from $K$, 
\begin{equation} \label{schwimm}
K_{\delta}=\bigcap_{\{H:  \vol_n\left(H^-\cap K\right) \leq \delta \}} {H^+}. 
\end{equation}
The floating body exists, i.e.,  is non-empty,  if $\delta$ is small enough and clearly, $K_0=K$ and $K_{\delta}\subseteq K$,  for all $\delta \geq 0$.
\vskip 3mm
Similarly, we now introduce the analogue notion for convex, closed, {\em not necessarily bounded} sets $C$. 
\begin{definition} [Floating Set]\label{floating set}
Let $C$ be a closed  convex subset of $\mathbb{R}^n$ with non-empty interior. 
For $\delta>0$, we define the  floating set $C_{\delta} $ by
\begin{align*}
		C_{\delta}= \bigcap \left\{ H^+ : \vol_n\left(H^-\cap C\right) \leq \delta \right\},
	\end{align*}
	where $H$ denotes a hyperplane.
\end{definition}
\vskip 2mm
\noindent
It is clear that $C_\delta $ is a closed convex subset of $C$.  
While for a convex body $K$, $K_\delta$ is a proper subset of $K$ if $\delta >0$, it is now possible that $C_\delta=C$ for  $\delta >0$, e.g.,  when $C$ is a halfspace.

The next proposition states a  property  of the floating set $C_\delta$ which we will need later. The proof is  similar to the one given for the floating body in \cite{SchuettWerner1994}
and we omit it.
\par
\begin{proposition}\label{properties}
Let   $C$ be a closed convex subset of $\mathbb{R}^n$.
 For all \(\delta\) such that \( C_\delta\neq \emptyset \) and all \( x_\delta\in \partial(C_\delta)\cap int (C)\) there exists a support hyperplane \(H\) at \( x_\delta\) to \( C_\delta\) such that \(\delta=\vol_n(C\cap H^-)\).\\
\end{proposition}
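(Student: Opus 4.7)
The plan is to use $x_\delta\in\partial C_\delta$ to extract a sequence of defining hyperplanes of $C_\delta$ converging to a candidate supporting hyperplane $H$ at $x_\delta$, then show that the cap cut off by $H$ has volume exactly $\delta$. The upper bound $\le\delta$ will follow from lower semicontinuity of the cap volume (Fatou); the lower bound $\ge\delta$ from a perturbation argument that uses $x_\delta\in\intt(C)$ together with Brunn's theorem. Throughout, abbreviate $V(v,b):=\vol_n(\{x\in C:\langle x,v\rangle\le b\})$, so by Definition 1, $C_\delta=\bigcap_{V(v,b)\le\delta}\{x:\langle x,v\rangle\ge b\}$.

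Since $x_\delta\in\partial C_\delta$, pick $y_k\to x_\delta$ with $y_k\notin C_\delta$ and corresponding hyperplanes $H_k=\{\langle\cdot,u_k\rangle=a_k\}$ satisfying $V(u_k,a_k)\le\delta$ and strictly separating $y_k$ from $C_\delta$, so $\langle y_k,u_k\rangle<a_k\le\langle x_\delta,u_k\rangle$ (the second inequality because $x_\delta\in C_\delta$). Compactness of $S^{n-1}$ and this sandwich give, along a subsequence, $u_k\to u$ and $a_k\to a:=\langle x_\delta,u\rangle$, so $H:=\{\langle\cdot,u\rangle=a\}$ passes through $x_\delta$.

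For $V(u,a)\le\delta$: for every $x\notin H$ one has $\1_{H_k^-\cap C}(x)\to\1_{H^-\cap C}(x)$ by continuity of the inner product, and since $H$ has Lebesgue measure zero, Fatou's lemma gives $V(u,a)\le\liminf_k V(u_k,a_k)\le\delta$. Combined with $x_\delta\in H$, this already shows that $H$ is a supporting hyperplane of $C_\delta$ at $x_\delta$.

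For $V(u,a)\ge\delta$: for every $s>0$ the translate $H'_s:=\{\langle\cdot,u\rangle=a+s\}$ has $x_\delta$ strictly in $(H'_s)^-$, so $V(u,a+s)>\delta$, otherwise $H'_s$ would witness $x_\delta\notin C_\delta$. It suffices to show $V(u,\cdot)$ is right-continuous at $a$. Writing $\phi(t):=\vol_{n-1}(C\cap\{\langle\cdot,u\rangle=t\})$, one has $V(u,a+s)-V(u,a)=\int_a^{a+s}\phi(t)\,dt$, and the main obstacle is ruling out that $\phi\equiv+\infty$ immediately above $a$. Here the hypothesis $x_\delta\in\intt(C)$ is essential: it places $a$ in the interior $(\alpha,\beta)$ of the support of $\phi$. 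By Brunn's theorem, $\phi^{1/(n-1)}$ is concave on $(\alpha,\beta)$; a non-negative concave function on an open interval that equals $+\infty$ at one point equals $+\infty$ on the whole interval, which would force $V(u,a)=\infty$ and contradict $V(u,a)\le\delta$ just shown. Hence $\phi$ is finite and continuous on $(\alpha,\beta)$, bounded on $[a,a+s]$ for small $s>0$, so $\int_a^{a+s}\phi\to 0$. Thus $V(u,a)=\lim_{s\to 0^+}V(u,a+s)\ge\delta$.
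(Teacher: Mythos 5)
Your argument is correct, and since the paper omits the proof (referring instead to the convex-body case in \cite{SchuettWerner1994}), it is worth comparing: your skeleton is the same as in the bounded setting --- extract defining hyperplanes $H_k$ excluding points $y_k\to x_\delta$, pass to a limit hyperplane $H$ through $x_\delta$, get $\vol_n(C\cap H^-)\le\delta$ by lower semicontinuity (Fatou) so that $H$ supports $C_\delta$, and get $\ge\delta$ by perturbing $H$ outward --- but the genuinely new point in the unbounded setting is exactly the one you isolate: the cap-volume function $b\mapsto V(u,b)$ need not be continuous when sections of $C$ can have infinite $(n-1)$-volume, and your Brunn-type argument, which is precisely where the hypothesis $x_\delta\in\intt(C)$ enters, settles it. Two small points of presentation rather than substance: the ``extended-value concavity'' of $\phi^{1/(n-1)}$ is cleanest justified directly from the inclusion $(1-\lambda)C_{t_1}+\lambda C_{t_2}\subseteq C_{(1-\lambda)t_1+\lambda t_2}$ for the slices $C_t=C\cap\{\langle\cdot,u\rangle=t\}$, which immediately shows that $\phi(t_0)=+\infty$ at one interior point together with nonempty slices nearby forces $\phi\equiv+\infty$ on an interval of positive length below $a$, contradicting $V(u,a)\le\delta$ (this bypasses any discussion of concave functions taking the value $+\infty$); and the case $n=1$ should be noted separately, since the exponent $\frac{1}{n-1}$ is then meaningless, though there $V(u,\cdot)$ is trivially $1$-Lipschitz and the conclusion is immediate. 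Neither remark affects the validity of your proof, and your use of the one-sided perturbation to avoid any (false, in general) upper semicontinuity of the cap volume is exactly the right device.
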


\subsection{ Log concave  functions}

Let $\psi: \R^n \rightarrow \R $
 be a  convex function.  
We always consider in this paper convex  functions 
$\psi$ such that  $0 <  \int _{\mathbb R^{n}} e^{-\psi(x)} dx < \infty$. 
\par
\noindent
In the general case, when $\psi$ is neither  smooth nor strictly convex, 
the gradient of $\psi$, denoted  by $\nabla \psi$, exists almost everywhere by Rademacher's theorem (see, e.g., \cite{Rademacher}),   and a theorem of Alexandrov \cite{Alexandroff} and Busemann and Feller \cite{Buse-Feller} guarantees the existence of the (generalized) Hessian, denoted  by
$\nabla^2 \psi$, almost everywhere in $\mathbb R^{n}$. The Hessian  is a quadratic form on $\mathbb{R}^n$, and if $\psi$
is a convex function, for almost every $x \in \mathbb{R}^n$ one has, when $y \rightarrow 0$, that 
$$
\psi( x + y) = \psi (x) + \langle \nabla   \psi(x),  y  \rangle + \frac{1}{2}  \langle \nabla^2 \psi(x) (y), y \rangle + o( \|y\|^2).
$$
\vskip 2mm
\noindent
 A function 
$f: \mathbb{R}^n \rightarrow \mathbb{R}_+$ is   log concave, if it is of the form $f= \exp(-\psi )$ where  $\psi: \R^n \rightarrow\R $ is convex. 
Recall also  that a measure $\nu$ with density $e^{-\psi}$ with respect to the Lebesgue measure
is called log-concave if 
$\psi \colon \R^n \to \R$ is a convex function. 
\vskip 2mm
Let $\psi: \R^n \rightarrow \R$
 be a  convex function and let  
$$
\operatorname{epi}(\psi) = \{ (x,y) \in \mathbb{R}^n \times \mathbb{R}:   y \geq \psi(x)\}
$$
be the  epigraph of $\psi$.
Then $\operatorname{epi}(\psi)$  is a  closed  convex set in $\mathbb{R}^{n+1}$ and  for sufficiently small 
$\delta$ its floating sets $\operatorname{epi}(\psi)_\delta$ are, by above, 
\begin{equation} \label{floatingset}
\operatorname{epi}(\psi)_\delta=\bigcap_{\{H:  \vol_{n+1} \left(H^-\cap \operatorname{epi}(\psi)\right) \leq \delta \}} {H^+}. 
\end{equation}
\noindent
It is easy to see that there exists a unique convex function $\psi_\delta: \mathbb{R}^n \rightarrow \mathbb{R}$ 
such that $(\operatorname{epi}(\psi))_\delta =\operatorname{epi}(\psi_\delta)$.  
This leads to the definitions of a floating function  for convex and  log concave  functions.
\vskip 3mm
\begin{definition} 
Let $\psi: \R^n \rightarrow \R$
 be a  convex function.  Let $\operatorname{epi}(\psi)$ be its epigraph . Let $\delta >0$.
\par
\noindent
 (i) The  floating function  of $\psi$ is defined to be this function $\psi_\delta$ such that
\begin{equation}\label{flotfunct}
(\operatorname{epi}(\psi))_\delta = \operatorname{epi}\left(\psi_\delta\right).
\end{equation} 
\par
\noindent
(ii) Let $f(x)= \exp(-\psi (x) )$ be a log concave function. The  floating function $f_\delta$ of $f$ is defined as
\begin{equation}\label{flotlog}
f_\delta (x) = \exp\left(-\psi_\delta (x) \right).
\end{equation}
\end{definition}
\par
\noindent
Note that when $\psi$ is affine,   $\psi_\delta=\psi$ and, for $f=e^{-\psi}$,  $f_\delta=f$.

\vskip 4mm
\section {Main Theorem and consequences}

Let $C$ be a closed convex set in $\R^n$  and let $z \in \partial C$ be such that $N_{C} (z)$, the outer normal vector,  is unique. 
The following  notion  was  introduced for convex bodies in \cite{SchuettWerner1990}. We define it in the same way  for closed convex sets:  We put $r_C(z)$ to be the radius of the biggest 
Euclidean ball contained in $C$ that touches $C$ in $z$, 
\begin{equation} \label{rC}
r_C(z) = \max\{\rho: B^{n}_2 (z - \rho N_{C} (z), \rho ) \subset C\}.
\end{equation}
$r_{C}$ is called the rolling function of $C$.
If $N_C(z)$ is not unique, $r_C(z) =0$. If $C=\operatorname{epi}(\psi)$, we  will  use, from now on,  the notation  
\begin{equation} \label{r(x)}
r_\psi(x) = r _{\operatorname{epi}(\psi)}((x, \psi(x)).
\end{equation}
Since $\psi$ is continuous, the epigraph of $\psi$ is a closed set.
For functions $\psi$ such that $e^{-\psi}$ is integrable,  we have
that $r_{\psi}(z)$ is bounded  and for almost every $x \in \mathbb{R}^n$, $(x, \psi(x)$ is an element of a  Euclidean ball contained in the epigraph of $\psi$. 
\par
For the remainder of the paper,  $c_{n+1} $ will always be
\begin{equation}\label{ConstTh1-1}
c_{n+1} =  \frac{1}{2} \left(\frac{n+2}{\vol_n (B^n_2)}\right)^\frac{2}{n+2}. 
\end{equation}
\vskip 2mm
\begin{theorem}\label{theo:f-deltafloat}
Let  $\psi: \R^n \rightarrow \R $ be a convex function such that 
$0<\int _{\mathbb R^{n}} e^{-\psi(x)}   dx  < \infty $. 
Then with $c_{n+1}$ as in (\ref{ConstTh1-1}),
\begin{equation}\label{T1}  
\lim _{\delta \rightarrow 0} \frac{ \int _{\mathbb R^{n}}(e^{-\psi(x)}  - e^{-\psi_{\delta}(x)} )  \  dx } {\delta^{2/(n+2)}} =  c_{n+1} \int_{\mathbb R^{n}} \left(\det\left( \nabla^2 \psi (x) \right)\right)^\frac{1}{n+2} \  e^{-\psi(x)} dx.
\end{equation}
\end{theorem}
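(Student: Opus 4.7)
The plan is to reduce the global limit to a pointwise asymptotic that can then be integrated via dominated convergence. First I would rewrite
$$
\int_{\R^n}(e^{-\psi(x)} - e^{-\psi_\delta(x)})\,dx = \int_{\R^n} e^{-\psi(x)}\bigl(1 - e^{-(\psi_\delta(x) - \psi(x))}\bigr)\,dx,
$$
which pointwise behaves like $e^{-\psi(x)}(\psi_\delta(x) - \psi(x))$ as $\delta \to 0$. The problem therefore reduces to establishing the pointwise asymptotic
$$
\lim_{\delta \to 0}\frac{\psi_\delta(x) - \psi(x)}{\delta^{2/(n+2)}} = c_{n+1} \bigl(\det \nabla^2\psi(x)\bigr)^{1/(n+2)} \qquad \text{for a.e. } x \in \R^n,
$$
together with a $\delta$-independent, integrable majorant for the quotient.

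For the pointwise limit, fix $x_0$ at which the Alexandrov Hessian $A=\nabla^2\psi(x_0)$ exists and first suppose $\det A>0$. Proposition~\ref{properties} applied to $C = \operatorname{epi}(\psi)$ produces a support hyperplane to $\operatorname{epi}(\psi_\delta)$ at $(x_0, \psi_\delta(x_0))$ whose negative half-space meets the epigraph in volume exactly $\delta$. Since the floating construction, the Hessian, and the integrand $\det \nabla^2\psi \cdot e^{-\psi}$ are all invariant under the volume-preserving vertical shear $(x,t)\mapsto(x,t-\langle\nabla\psi(x_0),x-x_0\rangle)$, I would subtract off the affine part of $\psi$ at $x_0$ and translate, reducing to $x_0 = 0$, $\psi(0)=0$, $\nabla\psi(0)=0$. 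The extremal hyperplane is then asymptotically horizontal and $h:=\psi_\delta(0)$ is determined, up to lower order, by $\vol_{n+1}\{(y,t) : \psi(y)\leq t\leq h\} = \delta$. Using the Alexandrov expansion $\psi(y)=\tfrac12\langle Ay,y\rangle + o(|y|^2)$ near $0$, the cap is sandwiched between paraboloidal regions with Hessian $A\pm\eps I$; the substitutions $y=A^{-1/2}z$ and $z=\sqrt{2h}\,w$ then give the paraboloid cap volume
$$
h^{(n+2)/2}\,2^{n/2}(\det A)^{-1/2}\int_{B^n_2}(1-|w|^2)\,dw = \frac{2\,h^{(n+2)/2}\vol_n(B^n_2)}{(n+2)(\det A)^{1/2}}.
$$
Setting this equal to $\delta$ and solving yields $h = c_{n+1}(\det A)^{1/(n+2)}\delta^{2/(n+2)}$; letting $\eps\to 0$ after $\delta\to 0$ removes the Taylor error. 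Points with $\det A=0$ are handled by the same sandwich, giving $\psi_\delta(x)-\psi(x) = o(\delta^{2/(n+2)})$, consistent with the right-hand side vanishing there.

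The hard part is producing an integrable majorant for dominated convergence. Using the rolling function $r_\psi$ from (\ref{r(x)}), the inscribed ball $B^{n+1}_2\bigl((x,\psi(x))+r_\psi(x)N,\,r_\psi(x)\bigr)\subset\operatorname{epi}(\psi)$, combined with the monotonicity $B\subset C \Rightarrow B_\delta \subset C_\delta$ of the floating operation, yields the $\delta$-uniform bound
$$
\psi_\delta(x)-\psi(x)\leq C_n\,r_\psi(x)^{-n/(n+2)}\,\delta^{2/(n+2)}
$$
by comparison with an explicit spherical cap computation on a ball of radius $r_\psi(x)$. The dominated convergence step thus reduces to verifying that $r_\psi^{-n/(n+2)}\,e^{-\psi}\in L^1(\R^n)$, which is the technical heart of the argument: it relies on the fact that the integrability of the log-concave density $e^{-\psi}$ forces at least linear growth of $\psi$ at infinity and gives enough control on $r_\psi$ on the bulk of the mass through the geometry of the sublevel sets $\{\psi\leq t\}$. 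Combining the pointwise limit with this majorant then yields (\ref{T1}).
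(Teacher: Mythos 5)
Your overall architecture --- pointwise asymptotics for $(\psi_\delta(x)-\psi(x))/\delta^{2/(n+2)}$ plus a $\delta$-uniform rolling-function majorant and dominated convergence --- is exactly the paper's, but the majorant you assert is false as stated, and the integrability statement you defer is the actual technical core and is left unproved. Comparing with the inscribed ball of radius $r_\psi(x)$ controls the displacement of the floating boundary in the direction of the normal $N_\psi(z_x)$; converting this into the vertical gap $\psi_\delta(x)-\psi(x)$ costs the factor $\langle e_{n+1},N_\psi(z_x)\rangle^{-1}=(1+\|\nabla \psi(x)\|^2)^{1/2}$, which cannot be dropped. Concretely, for $n=1$ and $\psi(x)=x^2$ one computes $\psi_\delta(x)-\psi(x)=(6\delta)^{2/3}/4$ for every $x$, while $r_\psi(x)\to\infty$ as $|x|\to\infty$ (a disk of radius of order $|x|$ fits inside the epigraph tangentially at $(x,x^2)$), so no bound of the form $C_n\, r_\psi(x)^{-n/(n+2)}\delta^{2/(n+2)}$ can hold. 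The correct majorant carries the gradient weight (Lemma \ref{lemma:interchange}), and hence what must be proved is $\int_{\R^n}(1+\|\nabla\psi\|^2)^{1/2}\, r_\psi^{-n/(n+2)}\, e^{-\psi}\,dx<\infty$, i.e.\ Lemma \ref{integral}; this requires Lemma \ref{lemma:psi->grad} and the slab decomposition of the epigraph, Schwarz symmetrization, and the $\int_{\partial K} r_K^{-\alpha}\,d\mu_K$ estimates from \cite{SchuettWerner1990}, none of which your sketch supplies. Moreover, even granting the constant, the monotonicity $B\subseteq C\Rightarrow B_\delta\subseteq C_\delta$ does not by itself give a bound valid for all $x$ once $\delta_0$ is fixed: at points where $r_\psi(x)$ is small, or the normal nearly horizontal, relative to $\delta$, the floating body of the inscribed ball may be empty or may miss the vertical line through $x$, and the containment yields nothing; the paper instead works with the exact-volume support hyperplane through $(x,\psi_\delta(x))$ provided by Proposition \ref{properties}, with a case distinction (proof of Lemma \ref{lemma:interchange}).

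The pointwise part of your sketch is in the spirit of the paper's Lemma \ref{lemma:f-delta} (osculating comparison bodies after normalizing $\nabla\psi(x_0)=0$; paraboloids in place of the paper's ellipsoids), but the step you pass over --- that the exact-volume support hyperplane through $(x_0,\psi_\delta(x_0))$ is ``asymptotically horizontal,'' so its cut may be replaced by a horizontal paraboloid cap up to lower order --- is precisely where the paper invokes Lemma 11 of \cite{SchuettWerner1990}, which controls the volume cut from a ball by a tilted hyperplane through an interior point near the boundary; this needs an argument. Likewise the degenerate case $\det\nabla^2\psi(x_0)=0$ is not ``the same sandwich'': there is no nondegenerate paraboloid from below, and the paper argues separately via an elliptic-cylinder indicatrix and comparison ellipsoids with axes of length $1/\varepsilon$. (A small slip: your displayed paraboloid-cap volume misses a factor $2^{n/2}$, although the value of $h$ you then state, $c_{n+1}(\det A)^{1/(n+2)}\delta^{2/(n+2)}$, is the correct one.) In sum, the skeleton is right, but the two load-bearing steps --- the gradient-weighted uniform majorant and its integrability --- are respectively misstated and missing.
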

\vskip 2mm
\noindent
We note that  under the assumptions of the theorem,  the expression on the right hand side of the theorem is finite. 
Indeed, 
for a convex  function $\psi$ the following formulas  hold for the Gaussian curvature $\kappa_{\psi} (z)$  and the outer unit normal $N_{\psi} (z)$ in $z=(x, \psi(x)) \in \partial \operatorname{epi}(\psi)$  (see, e.g., \cite{CFGLSW}), 
\begin{equation} \label{curvature}
\kappa_{\psi} (z) = \frac{\det (\nabla^2 \psi(x))}{\left(1 + \|\nabla \psi (x)\|^2\right)^\frac{n+2}{2}}.
\end{equation}
and
\be\label{normal}
\langle N_{\psi} (z), e_{n+1}\rangle=\frac{1}{(1+\|\nabla \psi (x)\|^2)^{\frac{1}{2}}}.
\ee
As $ \kappa_{\psi} (z) = \prod_{i=1}^n \frac{1}{\rho^\psi_i(z)}$, where $\rho^\psi_i(z)$, $1 \leq i \leq n$,  are the principal radii of curvature, we have for almost all $x \in\Omega_\psi$ that  $r_\psi(x) \leq \frac{1}{\left(\kappa_{\psi} (z)\right)^\frac{1}{n}}$ . 
With (\ref{curvature}) we thus get 
$$
r_\psi(x) \leq \frac{1}{\left(\kappa_{\psi} (z)\right)^\frac{1}{n}}  = \frac {\left(1 + \|\nabla \psi (x)\|^2\right)^\frac{n+2}{2n}} {\left(\det \nabla^2 \psi(x)\right)^\frac{1}{n}}.
$$
Therefore
\begin{eqnarray*}
\int _{\mathbb R^{n}}\left(\det\left( \nabla^2( \psi (x))\right) \right)^\frac{1}{n+2} \  e^{-\psi (x) }\   dx 
&\leq&  \int_{\mathbb R^{n}} \frac{ \left(1 + \|\nabla \psi (x)\|^2\right)^\frac{1}{2}}{r_\psi(x) ^\frac{n}{n+2}} \ e^{-\psi (x) }\    dx 
\end{eqnarray*}
and  we prove in Lemma \ref{integral}  that the last integral is finite. 
\par
\noindent
If the determinant of the Hessian of $\psi$ is $0$ almost everywhere, the right hand term of the theorem is $0$.  This is in particular the case when $\psi$ is piecewise affine. As noted above, the left hand side of the theorem and the proposition will then also be $0$.
\vskip 3mm
\noindent
We postpone the proof of the theorem and  discuss  some consequences first.  The next Proposition will follow  from the lemmas needed for the proof of Theorem \ref{theo:f-deltafloat}.
\par
\noindent
\vskip 2mm
\begin{proposition}\label{corollary}
Let  $\psi: \R^n \rightarrow \R $ be a  convex function such that 
$0<\int _{\mathbb R^{n}} e^{-\psi(x)}   dx  < \infty $. 
Then with $c_{n+1}$ as given by (\ref{ConstTh1-1}),
\begin{equation*}
 \lim _{\delta \rightarrow 0} \frac{ \int_{\mathbb R^{n}} \left| \psi_\delta(x) - \psi (x)\right| \   e^{-\psi} \  dx} {\delta^{2/(n+2)}} =  c_{n+1} \int_{\mathbb R^{n}} \left(\det\left( \nabla^2 \psi (x) \right)\right)^\frac{1}{n+2} \  e^{-\psi(x)} dx.
\end{equation*}
\end{proposition}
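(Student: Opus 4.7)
My plan is to deduce the proposition from Theorem~\ref{theo:f-deltafloat} via a pointwise comparison of the two integrands.

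Since $\operatorname{epi}(\psi_\delta) = \operatorname{epi}(\psi)_\delta \subseteq \operatorname{epi}(\psi)$, one has $\psi_\delta(x) \geq \psi(x)$ for every $x \in \R^n$, so the absolute value disappears and the task reduces to analyzing $\int_{\R^n}(\psi_\delta - \psi)\, e^{-\psi}\,dx$. The elementary inequality
$$0 \leq t - (1 - e^{-t}) \leq \tfrac{1}{2}t^2 \quad (t \geq 0),$$
applied with $t = \psi_\delta(x) - \psi(x)$ and multiplied by $e^{-\psi(x)}$, yields the pointwise bound
\begin{equation*}
0 \,\leq\, (\psi_\delta(x) - \psi(x))\, e^{-\psi(x)} - \bigl(e^{-\psi(x)} - e^{-\psi_\delta(x)}\bigr) \,\leq\, \tfrac{1}{2}(\psi_\delta(x) - \psi(x))^2\, e^{-\psi(x)}.
\end{equation*}
Integrating over $\R^n$ and dividing by $\delta^{2/(n+2)}$, the integral of $e^{-\psi} - e^{-\psi_\delta}$ already has the desired limit by Theorem~\ref{theo:f-deltafloat}, so the proposition reduces to showing
\begin{equation*}
\frac{1}{\delta^{2/(n+2)}} \int_{\R^n} (\psi_\delta - \psi)^2\, e^{-\psi}\,dx \,\longrightarrow\, 0 \quad \text{as } \delta \to 0.
\end{equation*}

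To verify this vanishing, I would feed the pointwise-convergence and dominating-function setup underpinning Theorem~\ref{theo:f-deltafloat} into the factorization
\begin{equation*}
\frac{(\psi_\delta - \psi)^2\,e^{-\psi}}{\delta^{2/(n+2)}} \,=\, (\psi_\delta(x) - \psi(x))\cdot \frac{(\psi_\delta - \psi)\, e^{-\psi}}{\delta^{2/(n+2)}}.
\end{equation*}
The second factor is dominated uniformly in $\delta$ by an integrable majorant provided by those lemmas, while the first factor $(\psi_\delta - \psi)(x) \to 0$ pointwise a.e., since $\psi_\delta \downarrow \psi$ by the monotonicity of floating sets in $\delta$. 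Dominated convergence then delivers the $o(1)$ bound.

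The main obstacle is that the first factor $(\psi_\delta - \psi)$ is not uniformly bounded; on the tail $T_\delta = \{x : \psi_\delta(x) - \psi(x) > 1\}$ it can grow. To handle $T_\delta$, I would exploit the reverse inequality $e^{-\psi} - e^{-\psi_\delta} \geq (1 - e^{-1})\,e^{-\psi}$ valid there, which together with Theorem~\ref{theo:f-deltafloat} gives $\int_{T_\delta} e^{-\psi}\,dx = O(\delta^{2/(n+2)})$; combining this with the pointwise shrinkage of $T_\delta$ as $\delta \to 0$ and the cap-geometric bound $(\psi_\delta(x) - \psi(x)) \leq C(x)\,\delta^{2/(n+2)}$ supplied by the same lemmas (with $C(x)\,e^{-\psi(x)}$ integrable) shows that the tail contribution is also $o(\delta^{2/(n+2)})$, completing the proof.
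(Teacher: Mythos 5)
Your reduction is sound up to its last step: $\psi_\delta\ge\psi$, the inequality $0\le t-(1-e^{-t})\le t^2/2$, and Theorem \ref{theo:f-deltafloat} do reduce the Proposition to showing $\delta^{-2/(n+2)}\int_{\R^n}(\psi_\delta-\psi)^2e^{-\psi}\,dx\to0$, and off the tail your dominated convergence argument works, since there the integrand is majorized by the integrable function supplied by Lemmas \ref{lemma:interchange} and \ref{integral}. The genuine gap is on the tail $T_\delta=\{\psi_\delta-\psi>1\}$: the two facts you invoke do not combine to give what is needed. Writing $C(x)=c_n\,(1+\|\nabla\psi(x)\|^2)^{1/2}r_\psi(x)^{-n/(n+2)}$ for the majorant of Lemma \ref{lemma:interchange}, inserting the pointwise bound $\psi_\delta-\psi\le C(x)\,\delta^{2/(n+2)}$ into the square gives $\delta^{2/(n+2)}\int_{T_\delta}C^2e^{-\psi}$, and $C^2e^{-\psi}$ is in general \emph{not} integrable: it contains $r_\psi^{-2n/(n+2)}$, whose exponent is $\ge1$ as soon as $n\ge2$, outside the range $\alpha<1$ of Lemma \ref{integral} (which is sharp in this generality), and it also squares the gradient factor, beyond Lemma \ref{lemma:psi->grad}. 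The other fact, $\int_{T_\delta}e^{-\psi}=O(\delta^{2/(n+2)})$, is useless against the factor $(\psi_\delta-\psi)^2/\delta^{2/(n+2)}$, which is unbounded on $T_\delta$; note that $T_\delta$ is genuinely nonempty in natural examples (for $\psi(x)=2\cosh x$ on $\R$ one checks $\psi_\delta(x)-\psi(x)$ grows like $\delta^{2/3}e^{x/3}$ as $x\to\infty$, for every fixed $\delta$). So as written the tail contribution is not controlled.

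The gap can be closed without new lemmas if you replace $t^2/2$ by $\min(t,\,t^2/2)$: for any $\eta>0$, on $\{\psi_\delta-\psi\le\eta\}$ the error is at most $\frac{\eta}{2}(\psi_\delta-\psi)e^{-\psi}\le\frac{\eta}{2}\,\delta^{2/(n+2)}C e^{-\psi}$, while on $\{\psi_\delta-\psi>\eta\}$ you use the linear bound $t-(1-e^{-t})\le t\le C\,\delta^{2/(n+2)}$ together with $\{\psi_\delta-\psi>\eta\}\subseteq\{C>\eta\,\delta^{-2/(n+2)}\}$ and the absolute continuity of $\int C e^{-\psi}\,dx$ (finite by Lemma \ref{integral}), so that piece is $o(\delta^{2/(n+2)})$ for each fixed $\eta$; letting $\delta\to0$ and then $\eta\to0$ finishes. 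Be aware, though, that this whole detour through Theorem \ref{theo:f-deltafloat} is different from, and longer than, the paper's proof: the paper applies dominated convergence directly to $(\psi_\delta-\psi)e^{-\psi}/\delta^{2/(n+2)}$, whose a.e.\ pointwise limit $c_{n+1}\left(\det\nabla^2\psi\right)^{1/(n+2)}e^{-\psi}$ is exactly Lemma \ref{lemma:f-delta} and whose integrable majorant is given by Lemmas \ref{lemma:interchange} and \ref{integral}, so no quadratic error term ever appears.
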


\vskip 3mm
\noindent
We call the quantity   $\int _{\mathbb R^{n}}\left(\det\left( \nabla^2 \psi (x) \right)\right)^\frac{1}{n+2} \  e^{-\psi(x)}   \   d (x)$
the {\em affine surface area} of  the log concave function $f=e^{-\psi}$ or, equivalently, 
the affine surface area of  the log concave measure $ d\mu=f dm$, 
\begin{equation}\label{asf}
\operatorname{as}(f) = \operatorname{as} (\mu) =\int_{\mathbb R^{n}} \left(\det\left( \nabla^2 \psi (x) \right)\right)^\frac{1}{n+2} \  e^{-\psi(x)}   \   d (x).
\end{equation}
In view of Corollary \ref{corollary}, another point of view is to interpret  the expression on the right hand side  as the  limit of the weighted (with weight $e^{-\psi}$) ``volume" difference  of the convex  function $\psi$  and its floating function $\psi_\delta$ and call the expression on the right hand side  of the theorem and the corollary the affine surface area  $as(\psi)$ of the convex function of $\psi$. 
\par
\noindent
We now give  reasons why we call  (\ref{asf})  affine surface area. We first recall the definition  of  the $L_p$-affine surface areas $as_p(K)$ for convex bodies $K$.
For $- \infty \leq p \leq \infty$, $p \neq -n$, they are  defined as \cite{Blaschke:1923,   Lutwak:1996, SchuettWerner2004}
\begin{equation}\label{asp-K}
\operatorname{as}_p(K) = \int_{\partial K} \frac{ \kappa_K(z)^\frac{p}{n+p}}{\langle z, N_K(z) \rangle^\frac{n(p-1)}{n+p}} d\mu_K(z).
\end{equation}
Here, $N_K(z)$ is the outer unit normal at $z \in \partial K$, $\mu_K$ is the usual surface area measure on $\partial K$  and $\kappa_K(z)$ is the Gauss curvature at $x$.
In particular, for $p=1$ we get the (usual) affine surface area of $K$,
\begin{equation}\label{asK}
\operatorname{as}(K) =  \int _{\partial K } \left( \kappa_{K}(z)\right)^\frac{1}{n+1}  d_{\mu_K}(z).
\end{equation}
\noindent
We pass from integration over $\mathbb{R}^n$ in (\ref{asf}) to integration over $\partial \operatorname{epi}(\psi)$
with the change of variable formula $\left(1 + \|\nabla \psi (x)\|^2\right)^\frac{1}{2}dx = d \mu_{\operatorname{epi}(\psi)}$. With (\ref{curvature}) we get
\begin{equation}
 \operatorname{as} (f) =\int_{\partial \operatorname{epi}(\psi)} \left(\kappa_{\partial \operatorname{epi}(\psi)} (z)\right)^\frac{1}{n+2} 
e^{-\langle z,e_{n+1}\rangle}d \mu_{\operatorname{epi}(\psi)}(z).
\end{equation}
Thus the expression  (\ref{asf})  coincides (for the  unbounded convex set $\operatorname{epi}(\psi)$)  with the one  for the affine surface area of a convex body in $\mathbb{R}^{n+1}$,
given in (\ref{asK}).
This is one reason  to call the quantity the affine surface area of $f$.  
\par
\noindent
Moreover, $as(f)$  has similar properties as  $as_1(K)$. Firstly,  an  affine invariance property holds
(with the same degree of homogeneity as the affine surface area for convex bodies in $\mathbb{R}^{n+1}$): For all   affine transformations $A: \R^n \rightarrow \R^n$ such that $\det A \neq 0$
$$
\operatorname{as}(f \circ A) = |\det A |^{-\frac{n}{n+2}} \   \operatorname{as}(f) .
$$
This identity is  easily checked using   $\Hess_x (\psi\circ A)=A^t\Hess_{Ax}\psi A$.
\newline
Secondly,  as for convex bodies, a valuation property holds for $as(f)$, i.e., we have for log concave functions $f_1=e^{-\psi_1}$ and  $f_2=e^{-\psi_2}$ 
that 
\[ \operatorname{as}(f_1)+ \operatorname{as}(f_2) =  \operatorname{as}(\max (f_1, f_2)) + as(\min (f_1, f_2)), 
\]
provided  $\min
(\psi_1,\psi_2)$ is convex. 
\par
\noindent
Another reason comes from  the next  observation which shows that the definition for affine surface area for a function agrees with the definition for convex bodies if the function is the 
gauge function $\| \cdot \|_K$ of a  convex body $K$ with $0$ in its interior, 
$$
\|x\|_K = \min\{ \alpha\geq 0: \ x \in  \alpha K\} = \max_{y \in K^\circ} \langle x, y \rangle = h_{K^\circ} (x). 
$$ 
If $\psi(x) = \frac{\|x\|_K^2}{2}$, then 
\begin{equation}\label{as-Gleichung}
 \   \operatorname{as}\left(\frac{\|\cdot\|_K^2}{2}\right) = \frac{(2\pi)^\frac{n}{2}}{\vol_n (B_2^n)} \  \operatorname{as}_{\frac{n}{n+1}}(K).
\end{equation}
This was already observed in \cite{CFGLSW}.  There,  a slightly different definition of affine surface area for log concave functions $f=e^{-\psi}$ was introduced, namely
\begin{equation}\label{AffIsoIq}
  \int_{\mathbb R^{n}}e^{-(\frac{n}{n+2}\psi(x)+\frac{1}{n+2}\langle x, \nabla\psi(x)\rangle)}
\left(\det \, \Hess \psi (x)\right)^{\frac{1}{n+2}} dx.
\end{equation}
  Both definitions coincide for $2$-homogeneous functions $\psi$. From
an affine isoperimetric inequality proved in \cite{CFGLSW}
for the expression (\ref{AffIsoIq}) we can deduce the following corollary.

\begin{corollary}
Let  $\psi: \R^n \rightarrow \R $ be a $2$-homogeneous, convex function such that 
$0<\int _{\mathbb R^{n}} e^{-\psi(x)}   dx  < \infty $. Then
$$
\operatorname{as}(f)
\leq(2\pi)^{\frac{n}{n+2}}\left(\int_{\mathbb R^{n}}e^{-\psi(x)}dx\right)^{\frac{n}{n+2}},
$$
with equality if and only if there are $a\in\mathbb R$ and a positive 
definite matrix $A$ such that for all $x\in\mathbb R^{n}$
$$
\psi(x)=\langle Ax,x\rangle +a.
$$
\end{corollary}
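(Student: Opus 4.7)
The plan is to reduce the claim to the affine isoperimetric inequality established in \cite{CFGLSW} for the functional (\ref{AffIsoIq}). The bridge is the paper's own remark that the two definitions of affine surface area for log concave functions coincide on the class of $2$-homogeneous $\psi$'s; once this identification is in place, the inequality is essentially quoted.

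First, I would verify the coincidence in (\ref{asf}) $=$ (\ref{AffIsoIq}) for $2$-homogeneous $\psi$ by applying Euler's homogeneity identity. Since $\psi$ is $2$-homogeneous and (by convexity plus finiteness of $\int e^{-\psi}$) differentiable almost everywhere, one has $\langle x,\nabla\psi(x)\rangle = 2\psi(x)$ for a.e.\ $x$. Hence
\begin{equation*}
\frac{n}{n+2}\psi(x)+\frac{1}{n+2}\langle x,\nabla\psi(x)\rangle
= \frac{n}{n+2}\psi(x)+\frac{2}{n+2}\psi(x)=\psi(x),
\end{equation*}
so the weight $e^{-(\frac{n}{n+2}\psi+\frac{1}{n+2}\langle x,\nabla\psi\rangle)}$ appearing in (\ref{AffIsoIq}) reduces to $e^{-\psi(x)}$, and the functional (\ref{AffIsoIq}) is literally $\operatorname{as}(f)$ as defined in (\ref{asf}).

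Next, I would quote the affine isoperimetric inequality from \cite{CFGLSW}: for every convex $\psi$ with $0<\int e^{-\psi}<\infty$, the expression (\ref{AffIsoIq}) is bounded above by $(2\pi)^{\frac{n}{n+2}}\bigl(\int_{\R^n}e^{-\psi}\,dx\bigr)^{\frac{n}{n+2}}$. Combining this with the identification in the previous paragraph yields the claimed bound on $\operatorname{as}(f)$.

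Finally, for the equality case I would again invoke \cite{CFGLSW}, where equality in (\ref{AffIsoIq}) is characterized by $\psi(x)=\langle Ax,x\rangle+a$ for some positive definite $A$ and $a\in\R$ (the paper's statement is written in this form to match the source). Under the standing $2$-homogeneity hypothesis, a direct check forces $a=0$, and conversely any pure quadratic form $\psi(x)=\langle Ax,x\rangle$ is $2$-homogeneous and realizes equality by a direct Gaussian computation on both sides of the inequality. The only real content of this proof, beyond the Euler identity, is the inequality in \cite{CFGLSW}; the potential obstacle is purely bookkeeping — making sure the differentiability issues implicit in writing $\langle x,\nabla\psi(x)\rangle$ are covered by the a.e.\ existence of $\nabla\psi$ noted in the paper, and that the finiteness of $\operatorname{as}(f)$ (already justified above via Lemma \ref{integral}) legitimizes the identification of integrals.
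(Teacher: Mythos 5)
Your proposal is correct and follows exactly the route the paper intends: the identification of $\operatorname{as}(f)$ with the expression (\ref{AffIsoIq}) for $2$-homogeneous $\psi$ via Euler's identity $\langle x,\nabla\psi(x)\rangle=2\psi(x)$, followed by quoting the affine isoperimetric inequality and its equality case from \cite{CFGLSW}. The paper leaves these details implicit ("we can deduce"), so your write-up simply fills in the same argument.
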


We conjecture that this inequality holds for general, convex functions.

We include the argument for (\ref{as-Gleichung})  for completeness.
\newline
We integrate in polar coordinates with respect to the normalized cone measure  $\sigma_K$  of  $K$. 
Thus, if we write $x=r\theta$, with $\theta\in\partial K$, then $dx=n \ \vol_n (K) r^{n-1}drd\sigma_K(\theta)$ and we get
\begin{eqnarray*}
 \operatorname{as} \left(\frac{\|\cdot\|_K^2}{2}\right)&=&  \int_{\mathbb R^{n}} \det\left( \nabla^2( \psi (x)) \right)^\frac{1}{n+2} \  e^{-\psi(x)} dx\\
 &=&n \ \vol_n (K) \int_0^{+\infty}r^{n-1}e^\frac{-r^2}{2}dr \int_{\partial K}  \left(\det \,\nabla^2  \psi (\theta) \right)^\frac{1}{n+2} \ d\sigma_K(\theta)\\
 &=& (2\pi)^\frac{n}{2}\frac{\vol_n (K)}{\vol_n (B_2^n) }\int_{\partial K}   \left(\det \, \nabla^2 \psi (\theta) \right)^\frac{1}{n+2} \ d\sigma_K(\theta).
\end{eqnarray*}
The relation between the normalized cone measure $\sigma_K$ and the Hausdorff measure $\mu_K$ on $\partial K$ is given by 
$$
d\sigma_K(x)=\frac{\langle \theta,N_K(\theta)\rangle d\mu_K(\theta)}{n \   \vol_n (K)}.
$$
 E.g., Lemma 1  of \cite{CFGLSW} (and its proof) show that 
$
\det \, \nabla^2 \psi (\theta) = \frac{\kappa_K(\theta)}{\langle \theta,N_K(\theta)\rangle ^{n+1}}
$.
Thus 
\begin{eqnarray*}
\operatorname{as}\left(\frac{\|\cdot\|_K^2}{2}\right)&=& \frac{(2\pi)^\frac{n}{2}}{n \  \vol_n (B_2^n)}\int_{\partial K}  \left(\frac{ \kappa_K (x) }{ \langle x,N_K(x)\rangle^{n+1} }\right)^\frac{1}{n+2} \langle x,N_K(x)\rangle d\mu_K(x)\\
&=& \frac{(2\pi)^\frac{n}{2}}{n \  \vol_n (B_2^n)}\  \operatorname{as}_\frac{n}{n+1}(K).
\end{eqnarray*}
\par
\noindent
Finally, the most compelling  reason to call the quantity  $as(f)$ affine surface area
is the following theorem proved in \cite{SchuettWerner1990} in the case of convex bodies in $\mathbb{R}^{n}$.
$$
\lim_{\delta \rightarrow 0} \frac{\vol_n(K) - \vol_n(K_\delta)}{\delta^\frac{2}{n+1}} = c_n \  \int _{\partial K } \left( \kappa_{K}(z)\right)^\frac{1}{n+1}  d_{\mu_K}(z),
$$
where $c_n=\frac{1}{2} \left(\frac{n+1}{\vol_{n-1} (B^{n-1}_2)}\right)^\frac{2}{n+1}$. 
Theorem \ref{theo:f-deltafloat} is  its analogue for  log concave functions.
Thus this theorem provides  a  geometric description   of  affine  surface area for such functions.

\section {Proof of Theorem
\ref{theo:f-deltafloat} \label{proofs1}}

We need several lemmas. 
The first lemma is standard and well known (see, e.g. \cite{SchuettWerner2003}).  
\vskip 2mm 
 \begin{lemma}\label{lemma:cap}
 Let
$$
\mathcal{E}=\left\{x\in\mathbb{R}^n:
\ \sum_{i=1}^{n}\left|\frac{x_{i}}{a_{i}}\right|^{2}\leq
1 \right\}.
$$ 
and let $H_h=H((a_{n}-h)e_{n},e_{n})$. Then for all
$h\leq a_{n}$ 
\begin{eqnarray*}
h^\frac{n+1}{2} \ \left( 1- \frac{h}{2a_n}\right)^\frac{n-1}{2} \  
\leq \  
\frac{ (n+1) \  a_n^\frac{n-1}{2}\   \vol_{n}(\mathcal{E}\cap H_h^{-})}{2^\frac{n+1}{2}
\vol_{n-1}(B_{2}^{n-1})\  \prod_{i=1}^{n-1}a_{i}}\  
\leq\  
h^\frac{n+1}{2}.
\end{eqnarray*}
In particular, if $\mathcal{E}= r B^n_2$ is a Euclidean ball  with radius  $r$  in $\mathbb{R}^n$, then for for all $u \in S^{n-1}$, for $h \leq r$ and $H_h=H((r-h)u, u )$, 
\begin{eqnarray*}
 h^\frac{n+1}{2}  \left( 1-\frac{ h}{2r} \right)  \  \leq \   \frac{(n+1)\   \vol_{n}\left(rB^n_2\cap H_h^{-}\right)}{2^\frac{n+1}{2} \ \vol_{n-1}\left(B^{n-1}_2\right) \  r^\frac{n-1}{2}}
  \leq 
  \ h^\frac{n+1}{2}. 
\end{eqnarray*}
 \end{lemma}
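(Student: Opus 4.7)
The plan is to reduce to the case of a Euclidean ball by an affine change of variables, and then compute the cap volume by a single one-dimensional integration. Let $T=\operatorname{diag}(a_1,\ldots,a_n)$, so that $\mathcal{E}=T(B^n_2)$ and $|\det T|=\prod_{i=1}^n a_i$. The hyperplane $H_h=\{x_n=a_n-h\}$ pulls back under $T$ to the parallel hyperplane $\{y_n=1-h/a_n\}$, so the cap cut off from $\mathcal{E}$ by $H_h$ corresponds under $T$ to the spherical cap of height $h':=h/a_n$ in $B^n_2$. Writing $V(h')$ for the volume of that cap in the unit ball, this gives
\begin{equation*}
\vol_n(\mathcal{E}\cap H_h^-)=\Big(\prod_{i=1}^n a_i\Big)\,V(h/a_n).
\end{equation*}
Consequently the general ellipsoid case follows from the Euclidean-ball case, and by $SO(n)$-invariance of the ball the latter may be handled with $u=e_n$.

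Next I would slice the cap of $B^n_2$ perpendicularly to $e_n$: the slice at height $t\in[1-h',1]$ is an $(n-1)$-ball of radius $\sqrt{1-t^2}$, and substituting $s=1-t$ to linearise the integrand near the tip gives $1-t^2=s(2-s)$ and
\begin{equation*}
V(h')=\vol_{n-1}(B^{n-1}_2)\int_0^{h'}\bigl(s(2-s)\bigr)^{(n-1)/2}ds=2^{(n-1)/2}\vol_{n-1}(B^{n-1}_2)\int_0^{h'}s^{(n-1)/2}\bigl(1-s/2\bigr)^{(n-1)/2}ds.
\end{equation*}
The upper bound is obtained by dropping the factor $(1-s/2)^{(n-1)/2}\le 1$, producing $V(h')\le \tfrac{2^{(n+1)/2}}{n+1}\vol_{n-1}(B^{n-1}_2)(h')^{(n+1)/2}$. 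For the lower bound, $s\mapsto(1-s/2)^{(n-1)/2}$ is decreasing on $[0,h']$, so pulling out its minimum value $(1-h'/2)^{(n-1)/2}$ yields $V(h')\ge \tfrac{2^{(n+1)/2}}{n+1}\vol_{n-1}(B^{n-1}_2)(h')^{(n+1)/2}(1-h'/2)^{(n-1)/2}$.

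Substituting $h'=h/a_n$ and rearranging gives the two-sided bound in the ellipsoid statement; for the final sentence one specialises $a_1=\cdots=a_n=r$. The argument is a routine computation and I do not anticipate any real obstacle: the whole content is the correct change of variables (diagonalise the ellipsoid, then linearise near the cap tip by $s=1-t$), after which the remaining estimates are elementary monotone bounds on a beta-type integrand. The only care required is in bookkeeping of the exponents $(n\pm 1)/2$ and of the constants $2^{(n\pm 1)/2}\,\vol_{n-1}(B^{n-1}_2)$, which is why the lemma is quoted from the literature rather than re-derived in detail.
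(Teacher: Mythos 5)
Your proof of the first, ellipsoid inequality is correct and complete: the reduction via the diagonal map $T$, the slicing of the unit-ball cap, the substitution $s=1-t$ giving the integrand $2^{(n-1)/2}s^{(n-1)/2}(1-s/2)^{(n-1)/2}$, and the two monotone bounds all check out, and the constants come out exactly as in the statement. This is precisely the standard computation the paper has in mind when it quotes the lemma from the literature instead of proving it, so there is nothing to compare on that part.

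The one genuine problem is your final sentence. Specialising $a_1=\cdots=a_n=r$ in what you proved yields the lower factor $\bigl(1-\tfrac{h}{2r}\bigr)^{\frac{n-1}{2}}$, whereas the second display of the lemma asserts the factor $\bigl(1-\tfrac{h}{2r}\bigr)$ to the first power. For $n\le 3$ this is harmless, since $(1-x)^{\frac{n-1}{2}}\ge 1-x$ when $\tfrac{n-1}{2}\le 1$, so your bound implies the stated one; but for $n\ge 4$ the stated ball inequality is strictly stronger than what specialisation gives, and it is in fact false for $h$ comparable to $r$: for $n=5$, $r=1$, $h=1$ the middle quantity equals $\frac{6\,\vol_5(B_2^5)/2}{8\,\vol_4(B_2^4)}\approx 0.40$, which is smaller than $h^{\frac{n+1}{2}}\bigl(1-\tfrac{h}{2r}\bigr)=\tfrac12$. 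So "one specialises $a_1=\cdots=a_n=r$" does not deliver the displayed ball inequality; you should either keep the exponent $\tfrac{n-1}{2}$ in the ball case (this is the form actually used later in the paper, cf.\ the factor with exponent $\tfrac{n}{2}$ in dimension $n+1$ in (\ref{delta1})), or note explicitly that the exponent-one version only holds in a restricted range of $h/r$ (for instance via $\bigl(1-\tfrac{h}{2r}\bigr)^{\frac{n-1}{2}}\ge 1-\tfrac{(n-1)h}{4r}$, which introduces an $n$-dependent constant). As written, the last step claims more than your (correct) computation proves.
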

 \vskip 4mm
 \noindent
The next  lemma is  well known. We refer to e.g., \cite{RockafellarWets}.
\vskip 2mm
  \begin{lemma}\label{integrate+limit}
Let  $\psi: \R^n \rightarrow \R $ be a  convex function. Then
$\int_{}   e^{-\psi(x)}   dx  < \infty $ if and only if  for some  $\gamma >0$ there exists $\beta \in (-\infty, \infty)$ such that 
for all $x \in \mathbb{R}^n$, 
\begin{equation}\label{levelcoercive}
\psi(x) \geq \gamma \|x\| + \beta.
\end{equation}
\end{lemma}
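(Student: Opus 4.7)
The direction $(\Leftarrow)$ is immediate: the bound $\psi(x)\ge\gamma\|x\|+\beta$ gives
\[
\int_{\R^n}e^{-\psi(x)}\,dx\le e^{-\beta}\int_{\R^n}e^{-\gamma\|x\|}\,dx=e^{-\beta}\,n\vol_n(B_2^n)\int_0^\infty r^{n-1}e^{-\gamma r}\,dr<\infty.
\]
For $(\Rightarrow)$ I would work with the recession (asymptotic) function
\[
\psi_\infty(v):=\lim_{t\to\infty}\frac{\psi(tv)-\psi(0)}{t}=\sup_{t>0}\frac{\psi(tv)-\psi(0)}{t}\in(-\infty,+\infty],
\]
where both the existence of the limit and the supremum representation follow from convexity of $\psi$, which forces the difference quotient $t\mapsto(\psi(tv)-\psi(0))/t$ to be nondecreasing. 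The plan is first to prove $\psi_\infty(v)>0$ for every $v\in S^{n-1}$ and then to promote this pointwise positivity to a uniform linear minorant of $\psi$.

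\emph{Step 1: $\psi_\infty>0$ on $S^{n-1}$.} Argue by contradiction: suppose $\psi_\infty(v_0)\le 0$ for some $v_0\in S^{n-1}$. The supremum formula yields $\psi(tv_0)\le\psi(0)$ for every $t\ge 0$, so the closed convex sublevel set $L:=\{x:\psi(x)\le\psi(0)+1\}$ contains the whole ray $\{tv_0:t\ge 0\}$. Since $\psi$ is convex and finite on $\R^n$ it is continuous, so $L$ also contains a Euclidean ball $B_2^n(0,r)$ for some $r>0$. By convexity $L\supseteq\conv\bigl(B_2^n(0,r)\cup\{tv_0:t\ge 0\}\bigr)$, and this convex hull has infinite Lebesgue measure, because its cross-section at height $s\ge 0$ along $v_0$ contains a transverse disk of radius at least $r/2$ (take $\lambda=\tfrac12$ and $q\in v_0^\perp\cap B_2^n(0,r)$ in $\lambda(2sv_0)+(1-\lambda)q$). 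Hence $\int_{\R^n}e^{-\psi}\ge e^{-\psi(0)-1}\vol_n(L)=\infty$, contradicting the hypothesis.

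\emph{Step 2: uniform linear lower bound.} For each $v\in S^{n-1}$ choose $t_v>0$ with $(\psi(t_vv)-\psi(0))/t_v\ge\gamma_v>0$, where one may take $\gamma_v:=\tfrac12\min(\psi_\infty(v),1)$. By continuity of $\psi$, the same inequality, with $\gamma_v$ replaced by $\gamma_v/2$, persists for all $w$ in an open neighborhood of $v$. Extract a finite subcover of the compact sphere $S^{n-1}$ to obtain a single $T>0$ and a single $\gamma>0$ with $(\psi(Tw)-\psi(0))/T\ge\gamma$ for every $w\in S^{n-1}$. Monotonicity of the difference quotient then yields $\psi(x)\ge\psi(0)+\gamma\|x\|$ whenever $\|x\|\ge T$, while on the compact ball $\{\|x\|\le T\}$ the continuous function $\psi$ is bounded below by some constant $M$. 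Combining these two estimates gives $\psi(x)\ge\gamma\|x\|+\beta$ globally for a suitable $\beta\in\R$.

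\textbf{Main obstacle.} The step requiring the most care is the infinite-volume claim in Step 1 -- namely that the convex hull of a Euclidean ball around the origin and a half-line has infinite $n$-dimensional Lebesgue measure. Once this is settled, the passage from pointwise positivity of $\psi_\infty$ on $S^{n-1}$ to a uniform linear minorant is a standard combination of convexity (monotonicity of slopes) with compactness of $S^{n-1}$, and requires no extra work even in the directions where $\psi_\infty(v)=+\infty$.
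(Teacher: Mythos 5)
Your proposal is correct, and both directions are complete: the easy implication by integrating $e^{-\gamma\|x\|}$ in polar coordinates, and the converse via the recession function. The point you flag as the main obstacle is in fact settled by your own slice argument: the convex hull of $B_2^n(0,r)$ and the ray $\{tv_0:t\ge 0\}$ contains, over each height $s\ge0$ in the direction $v_0$, an $(n-1)$-dimensional disk of radius $r/2$ (your points $sv_0+\tfrac12 q$, $q\in v_0^{\perp}\cap B_2^n(0,r)$), so Fubini gives infinite volume and the contradiction in Step 1 is sound. Step 2 is also fine; the only cosmetic gap is that after extracting the finite subcover you should say explicitly that you take $T=\max_i t_{v_i}$ and $\gamma=\min_i \gamma_{v_i}/2$, using the monotonicity of the difference quotient already at this stage to replace each $t_{v_i}$ by the common $T$ — you invoke that monotonicity one line later anyway, so nothing is missing in substance. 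As for comparison: the paper does not prove this lemma at all; it records it as well known and refers to Rockafellar--Wets, where it sits inside the general theory of recession functions and level-boundedness (coercivity) of convex functions. Your write-up is essentially that standard argument made self-contained and elementary — pointwise positivity of the recession function forced by finiteness of $\int e^{-\psi}$, then compactness of $S^{n-1}$ plus monotonicity of slopes to get a uniform linear minorant — which is exactly what a reader would extract from the cited reference, so it is a legitimate stand-alone substitute for the citation rather than a genuinely different route.
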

\par
\noindent
In particular it follows from (\ref{levelcoercive})  that
\begin{equation}\label{coercive}
\lim_{\|x\| \rightarrow \infty} \psi(x) = \infty.
\end{equation}
This property is sometimes called coercivity (see, e.g., \cite{RockafellarWets}).

\vskip 4mm
 \begin{lemma}\label{lemma:psi->grad}
Let  $\psi: \R^n \rightarrow \R $ be a  convex function.
If  $0 < \int_{\mathbb R^{n}}   e^{-\psi(x)}   dx < \infty$, then $\int_{\mathbb R^{n}}   e^{-\psi(x)}  \left( 1 +\| \grad \psi(x) \|^2\right) ^\frac{1}{2}   dx < \infty$. 
\end{lemma}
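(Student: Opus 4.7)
The identity $\|\nabla e^{-\psi(x)}\|=e^{-\psi(x)}\|\nabla\psi(x)\|$ a.e., combined with the pointwise bound $(1+t^2)^{1/2}\leq 1+t$ for $t\geq 0$, reduces the claim to
\[
\int_{\R^n}\|\nabla f\|\,dx<\infty,\qquad f:=e^{-\psi}.
\]
The plan is to estimate $\|\nabla f\|_{L^1}$ via the coarea formula and bound the perimeters of the sublevel sets $\{\psi\leq s\}$ using the linear growth supplied by Lemma \ref{integrate+limit}.

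First, Lemma \ref{integrate+limit} gives $\gamma>0$ and $\beta\in\R$ with $\psi(x)\geq\gamma\|x\|+\beta$ for all $x$. Hence for every $s\geq\beta$ the sublevel set $K_s:=\{\psi\leq s\}$ is a (possibly empty) convex body contained in the Euclidean ball of radius $R_s:=(s-\beta)/\gamma$ centred at the origin. By monotonicity of surface area for nested convex bodies,
\[
\mathcal{H}^{n-1}(\partial K_s)\ \leq\ n\,\vol_n(B_2^n)\,R_s^{n-1}\ =\ \frac{n\,\vol_n(B_2^n)}{\gamma^{n-1}}(s-\beta)^{n-1}.
\]
Second, $\psi$ is finite and convex, hence locally Lipschitz; so is $f$. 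Applying Federer's coarea formula to $f$ restricted to a ball $B_R$, then sending $R\to\infty$ by monotone convergence, and finally substituting $s=-\log t$ yields
\[
\int_{\R^n}e^{-\psi}\|\nabla\psi\|\,dx\ =\ \int_\beta^{\infty}e^{-s}\,\mathcal{H}^{n-1}(\partial K_s)\,ds\ \leq\ \frac{n\,\vol_n(B_2^n)}{\gamma^{n-1}}\int_\beta^{\infty}(s-\beta)^{n-1}e^{-s}\,ds,
\]
and the last integral is a Gamma integral evaluating to $e^{-\beta}(n-1)!$, so the whole expression is at most $\frac{n!\,\vol_n(B_2^n)\,e^{-\beta}}{\gamma^{n-1}}<\infty$. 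Combining with $\int e^{-\psi}\,dx<\infty$ yields the lemma.

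The main technical point will be justifying the coarea identity even though $\psi$ need not be $C^1$ and may be constant on sets of positive measure, so that the pointwise level sets $\{\psi=s\}$ are a priori ill-behaved. The cleanest workaround is the BV/perimeter formulation $\int\|\nabla f\|\,dx=\int_0^\infty P(\{f>t\})\,dt$, noting that each superlevel set $\{f>t\}=\{\psi<-\log t\}$ is open and convex, so that its perimeter equals the $\mathcal{H}^{n-1}$-measure of its topological boundary and hence is bounded by the surface area of the enclosing ball, as above. The a.e. identification of the distributional gradient of $f$ with $-e^{-\psi}\nabla\psi$ (valid because locally Lipschitz functions are in $W^{1,1}_{\rm loc}$) closes the loop.
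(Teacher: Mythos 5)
Your proof is correct, but it takes a genuinely different route from the paper. The paper argues coordinatewise: it bounds $\left(1+\|\nabla\psi\|^2\right)^{1/2}\leq 1+\sum_i\left|\partial\psi/\partial x_i\right|$, and for each coordinate fixes $y=(x_2,\dots,x_n)$, splits the $x_1$-line at a minimizer $m(y)$ of $\psi(\cdot,y)$, and integrates $\pm\partial_{x_1}\bigl(e^{-\psi}\bigr)$ exactly to get $2\int_{\R^{n-1}}e^{-\psi(m(y),y)}\,dy$, which is finite by the same growth bound $\psi(x)\geq\gamma\|x\|+\beta$ of Lemma \ref{integrate+limit} that you invoke. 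You instead work globally with $f=e^{-\psi}$ via the Fleming--Rishel/BV coarea formula, reducing the claim to perimeters of the convex sublevel sets $\{\psi\leq s\}$, which you bound by the surface area of the enclosing ball of radius $(s-\beta)/\gamma$ using monotonicity of surface area under inclusion of convex bodies; the resulting Gamma integral gives the explicit bound $n!\,\vol_n(B_2^n)\,e^{-\beta}/\gamma^{n-1}$. Your handling of the technical points is sound: $f$ is locally Lipschitz, hence in $W^{1,1}_{\rm loc}$ with $\nabla f=-e^{-\psi}\nabla\psi$ a.e., the coarea identity holds in $[0,\infty]$ for $L^1_{\rm loc}$ functions so it can legitimately be used to prove finiteness, and degenerate level values (where $\{\psi<s\}$ is empty or $\psi$ is locally constant) occur only at $s=\min\psi$, a null set in $s$. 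The trade-off: the paper's argument is elementary (Fubini plus the fundamental theorem of calculus for one-variable convex functions) and its template is reused later in the proof of Lemma \ref{integral} for $e^{-\psi/2}$, whereas yours needs heavier geometric-measure-theoretic machinery but is coordinate-free and produces a clean explicit constant.
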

\vskip 3mm
\begin{proof}
\begin{eqnarray*}
&&\int_{\mathbb R^{n}}   e^{-\psi(x)}  \left( 1 +\| \grad \psi(x) \|^2\right) ^\frac{1}{2}   dx  \leq \int_{\mathbb{R}^n}   e^{-\psi(x)}  \left( 1 +\| \grad \psi(x) \|\right)   dx \\
&&= \int_{\mathbb R^{n}}   e^{-\psi(x)} dx   + \int_{\mathbb R^{n}}   e^{-\psi(x)} \left(\sum_{i=1}^n \left | \frac{\partial  \psi}{\partial x_i}(x) \right |^2\right) ^\frac{1}{2}   dx 
\end{eqnarray*}
The first integral is finite by assumption. We consider the second integral.
\begin{eqnarray*}
&&\int_{\mathbb R^{n}}   e^{-\psi(x)} \left(\sum_{i=1}^n \left | \frac{\partial  \psi}{\partial x_i}(x) \right |^2\right) ^\frac{1}{2}   dx \leq \int_{\mathbb R^{n}}   e^{-\psi(x)} \sum_{i=1}^n \left | \frac{\partial  \psi}{\partial x_i}(x) \right |   dx \\
&&=  \sum_{i=1}^n  \int_{\mathbb R^{n}}   e^{-\psi(x)} \left | \frac{\partial  \psi}{\partial x_i}(x) \right |   dx. 
\end{eqnarray*}
Let $ y = (x_2, \cdots, x_n) \in \mathbb{R}^{n-1}$ and let $m(y) \in \mathbb{R}$ satisfy 
$$
\psi((m(y), y) = \min _{x_1 \in \mathbb{R} }\psi(x_1, y).
$$
By Lemma \ref{integrate+limit}, $\psi$ satisfies (\ref{coercive}).  This means that $\psi$ has a global minimum (which needs not be unique,  unless $\psi$ is strictly convex)
and therefore, $m(y)$ exists. Then
\begin{eqnarray*}
&& \int_{\mathbb R^{n}}   e^{-\psi(x)} \left | \frac{\partial  \psi}{\partial x_1}(x) \right |   dx_1 \cdots dx_n  \\
&& = \int_{\mathbb R^{n-1}}  \left( \int_{m(y)} ^\infty  e^{-\psi(x_1, y)}  \frac{\partial  \psi}{\partial x_1}(x_1, y)    dx_1 \right)  dy - \int_{\mathbb R^{n-1}}  \left( \int_{-\infty}^{m(y)}  e^{-\psi(x_1, y)} \frac{\partial  \psi}{\partial x_1}(x_1, y)    dx_1 \right)  dy \\
&& = \int_{\mathbb R^{n-1}}  \left( \int_{m(y)} ^\infty  -\frac{\partial}{\partial x_1} \left( e^{-\psi(x_1, y)} \right)   dx_1 \right)  dy +  \int_{\mathbb R^{n-1}}  \left( \int_{-\infty} ^{m(y)}   \frac{\partial}{\partial x_1} \left( e^{-\psi(x_1, y)} \right)   dx_1 \right)  dy \\
&& = 2 \   \int_{\mathbb R^{n-1}} e^{-\psi(m(y), y)}   dy.
\end{eqnarray*}
By Lemma  \ref{integrate+limit}, one has for all $x_1 \in \mathbb{R}$ that $\psi(x_1, y) \geq \gamma \|(x_1,y)\| + \beta$ for some $\gamma >0$ and some $\beta$.  It follows that $\psi(m(y), y) \geq \gamma \|(m(y),y) \| + \beta \geq  \gamma \|y \| + \beta$. It follows that this term is finite.
\par
\noindent
The other coordinates are treated similarly. This finishes the proof of the lemma.
\end{proof} 
\vskip 4mm
 \begin{lemma}\label{lemma:f-delta}
(i)  Let $x \in \mathbb R^{n}$ be such that the Hessian $\nabla^2 \psi$ at $x$  is positive definite. Then there are constants $\beta_1$ and $\beta_2$ such that  for all $\varepsilon >0$ there is $\delta_0= \delta_0(x, \varepsilon)$ such that for all $\delta \leq \delta_0$, 
\begin{eqnarray*}
(1- \beta_2 \varepsilon) \ 
c_{n+1}  \left(\det\left( \nabla^2 \psi (x)\right) \right)^\frac{1}{n+2} \leq \frac{\psi_\delta (x) - \psi(x) }{\delta^\frac{2}{n+2} }  \leq (1+ \beta_1 \varepsilon) \ 
c_{n+1}  \left(\det\left( \nabla^2 \psi (x)\right) \right)^\frac{1}{n+2},
\end{eqnarray*}
where  $c_{n+1}$ is given by (\ref{ConstTh1-1}).  Consequently, for $f = e^{-\psi}$ we get with (new)  constants $\beta_1$ and $\beta_2$
\begin{eqnarray*}
(1- \beta_2 \varepsilon)   f(x) \  c_{n+1}  \left(\det\left( \nabla^2\psi (x)\right) \right)^\frac{1}{n+2} &\leq&  \frac{ f(x) - f_\delta (x) }{\delta^\frac{2}{n+2} } \\ 
&\leq & (1+ \beta_1 \varepsilon)   f(x) \  c_{n+1}  \left(\det\left( \nabla^2\psi(x) \right) \right)^\frac{1}{n+2} .
\end{eqnarray*}
\vskip 2mm
\noindent
(ii)  Let $x \in \mathbb R^{n}$ be such that $ \det\left(\nabla^2 \psi(x)\right) =0$. Then for all $\varepsilon >0$ there is $\delta_0= \delta_0(x, \varepsilon)$ such that for all $\delta \leq \delta_0$, 
\begin{eqnarray*}
0 \leq \frac{\psi_\delta (x) - \psi(x) }{\delta^\frac{2}{n+2} }  \leq \varepsilon.
\end{eqnarray*}
Consequently,  for $f = e^{-\psi}$ we get for all $\varepsilon >0$ that there is $\delta_0= \delta_0(x, \varepsilon)$ such that for all $\delta \leq \delta_0$, 
\begin{eqnarray*}
0 \leq \frac{f(x) - f_\delta (x) }{\delta^\frac{2}{n+2} }  \leq \varepsilon.
\end{eqnarray*}
 \end{lemma}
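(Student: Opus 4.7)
The plan is to reduce locally to a quadratic approximation of $\psi$ near $x$ and to sandwich the relevant cap volumes between those of two explicit paraboloids.

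First I normalize: the map corresponding to translating $x$ to the origin and subtracting the affine function $y\mapsto\psi(x)+\langle\nabla\psi(x),y-x\rangle$ from $\psi$ is a volume-preserving shear of $\operatorname{epi}(\psi)\subset\R^{n+1}$. This shear preserves floating sets, hence preserves $\psi_\delta-\psi$ pointwise, so I may assume $x=0$, $\psi(0)=0$, $\nabla\psi(0)=0$. Writing $A:=\nabla^2\psi(0)$ and $Q(y):=\tfrac12\langle Ay,y\rangle$, Alexandrov's expansion gives, for any $\varepsilon'>0$, a radius $r_0>0$ with $(1-\varepsilon')Q(y)\le\psi(y)\le(1+\varepsilon')Q(y)$ for $\|y\|\le r_0$. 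The model calculation is the cap volume of the paraboloid $Q_\alpha:=\alpha Q$: via the change of variables $y=(\alpha A)^{-1/2}z$ and polar coordinates,
$$V_\alpha(h):=\vol_{n+1}\bigl\{(y,s):Q_\alpha(y)\le s\le h\bigr\}=\frac{2^{(n+2)/2}\vol_n(B_2^n)}{(n+2)\,\alpha^{n/2}(\det A)^{1/2}}\,h^{(n+2)/2}.$$
Completing the square shows that the tilted cap $\{Q_\alpha(y)\le s\le h+\langle a,y\rangle\}$ has volume $V_\alpha(H)$ with $H=h+\tfrac{\langle A^{-1}a,a\rangle}{2\alpha}\ge h$; setting $V_\alpha(h)=\delta$ yields the candidate floating height $c_{n+1}\alpha^{n/(n+2)}(\det A)^{1/(n+2)}\delta^{2/(n+2)}$.

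For the \emph{lower bound} on $\psi_\delta(0)$ in (i), consider the horizontal hyperplane $H_t=\{s=t\}$: if $\vol_{n+1}(H_t^-\cap\operatorname{epi}(\psi))\le\delta$, then the definition of the floating set forces $\psi_\delta(0)\ge t$. Coercivity (Lemma \ref{integrate+limit}) together with $\psi\ge(1-\varepsilon')Q$ confines $\{\psi\le t\}$ to $\|y\|\le r_0$ for $t$ small, giving $\vol_{n+1}(H_t^-\cap\operatorname{epi}(\psi))\le V_{1-\varepsilon'}(t)$ and hence $\psi_\delta(0)\ge c_{n+1}(1-\varepsilon')^{n/(n+2)}(\det A)^{1/(n+2)}\delta^{2/(n+2)}$. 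For the \emph{upper bound}, this positivity places $(0,\psi_\delta(0))\in\operatorname{int}(\operatorname{epi}(\psi))$, so Proposition \ref{properties} furnishes a support hyperplane $H_\delta=\{s=\psi_\delta(0)+\langle a_\delta,y\rangle\}$ to $\operatorname{epi}(\psi)_\delta$ at $(0,\psi_\delta(0))$ cutting a cap of volume exactly $\delta$. Using $\psi\le(1+\varepsilon')Q$ on the cap base gives $\delta\ge V_{1+\varepsilon'}(H'_\delta)$ with $H'_\delta=\psi_\delta(0)+\tfrac{\langle A^{-1}a_\delta,a_\delta\rangle}{2(1+\varepsilon')}$, whence $\psi_\delta(0)\le H'_\delta\le c_{n+1}(1+\varepsilon')^{n/(n+2)}(\det A)^{1/(n+2)}\delta^{2/(n+2)}$. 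Choosing $\varepsilon'$ proportional to $\varepsilon$ gives the two-sided bound of (i) with absolute constants $\beta_1,\beta_2$ depending only on $n$.

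For (ii), the Alexandrov expansion still yields $\psi(y)\le\tfrac12\langle(A+\tilde\varepsilon I)y,y\rangle$ locally for every $\tilde\varepsilon>0$, and applying the upper-bound argument of (i) with $A+\tilde\varepsilon I$ (positive definite) in place of $A$ gives $\psi_\delta(0)\le C_n(\det(A+\tilde\varepsilon I))^{1/(n+2)}\delta^{2/(n+2)}$. Since $\det A=0$, $\det(A+\tilde\varepsilon I)\to 0$ as $\tilde\varepsilon\to 0$, so the prefactor is smaller than $\varepsilon$ once $\tilde\varepsilon$ is small. The passage to $f=e^{-\psi}$ in both parts is immediate from $f(x)-f_\delta(x)=e^{-\psi(x)}\bigl(1-e^{-(\psi_\delta(x)-\psi(x))}\bigr)$, the convergence $\psi_\delta(x)-\psi(x)\to 0$ as $\delta\to 0$, and the elementary estimate $|1-e^{-u}-u|\le u^2/2$ for $u\ge 0$.

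The main technical obstacle is justifying that the base of the support hyperplane $H_\delta$ from Proposition \ref{properties} lies inside the Alexandrov neighborhood $\|y\|\le r_0$ for $\delta$ small; this reduces to showing $\psi_\delta(0)\to 0$ and $a_\delta\to 0$. The former is given by the lower-bound argument, and the latter follows from differentiability of $\psi$ at $0$: the subdifferential $\partial\psi(0)=\{0\}$ makes the support hyperplane to $\operatorname{epi}(\psi)$ at $(0,0)$ unique and horizontal, so any accumulation point of the slopes $a_\delta$ must vanish.
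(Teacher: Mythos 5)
Your proposal is correct in strategy but follows a genuinely different route from the paper. The paper works at $z_{x_0}=(x_0,\psi(x_0))$ without normalizing the gradient: it encloses $\partial\operatorname{epi}(\psi)$ locally between the dilated osculating ellipsoids $\mathcal{E}(\varepsilon^{-})$ and $\mathcal{E}(\varepsilon^{+})$, uses the ellipsoidal cap estimates of Lemma \ref{lemma:cap} together with Lemma 11 of \cite{SchuettWerner1990} to compare the cap cut off by the (tilted) support hyperplane through $z_\delta$ with a centered cap of a ball, and converts distances along the normal into $\psi_\delta-\psi$ via $\langle N_\psi,e_{n+1}\rangle=(1+\|\nabla\psi\|^2)^{-1/2}$; part (ii) is treated there via the Dupin indicatrix (an elliptic cylinder) and an inscribed ellipsoid with $k$ long axes. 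Your shear normalization is legitimate (a determinant-one affine map of $\R^{n+1}$ commutes with the floating-set operation, so $\psi_\delta-\psi$ at $x$ is unchanged) and eliminates all the $(1+\|\nabla\psi\|^2)^{1/2}$ bookkeeping; paraboloid caps have exact volumes, the tilt of the support hyperplane is absorbed by completing the square (and only helps the upper bound), the lower bound needs nothing beyond horizontal cuts and the definition of the floating set, and replacing $A$ by $A+\tilde\varepsilon I$ in (ii) is cleaner than the cylinder argument. What the paper's route buys is that the delicate tilted-cap comparison is outsourced to \cite{SchuettWerner1990}; what yours buys is a self-contained, elementary computation with cleaner constants ($\beta_1=\beta_2=1$ essentially).

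Three justifications need repair, none fatal. First, your claim that $\psi_\delta(0)\to 0$ ``is given by the lower-bound argument'' is backwards: that argument bounds $\psi_\delta(0)$ from below, whereas you need an upper bound tending to $0$. The fix is short: $(\operatorname{epi}\psi)_\delta$ increases as $\delta$ decreases, and any interior point $z$ of $\operatorname{epi}(\psi)$ lies in $(\operatorname{epi}\psi)_\delta$ as soon as $\delta$ is smaller than half the volume of a ball around $z$ contained in $\operatorname{epi}(\psi)$; applying this to the points $(x,\psi(x)+\eta)$ gives $\psi_\delta(x)\downarrow\psi(x)$. (The paper also uses this localization without detailed proof.) Second, after the shear you may not quote Lemma \ref{integrate+limit}: $e^{-(\psi-\ell)}$ need not be integrable for the tangent affine function $\ell$, so coercivity is not available. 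You do not need it: on the sphere $\|y\|=r_0$ the sheared function is at least $(1-\varepsilon')\lambda_{\min}r_0^2/2>0$, and convexity together with the value $0$ at the center propagates this bound to all $\|y\|\geq r_0$, confining the sublevel sets for small $t$. Third, the slope $a_\delta$ is a subgradient of $\psi_\delta$ at $0$, not of $\psi$ (Proposition \ref{properties} gives a support hyperplane to $(\operatorname{epi}\psi)_\delta$), so ``$\partial\psi(0)=\{0\}$'' does not apply directly: first show $(a_\delta)$ is bounded (otherwise $\psi_\delta(y)\geq\psi_\delta(0)+\langle a_\delta,y\rangle$ would contradict $\psi_\delta\leq\psi_{\delta_0}$ on a fixed sphere), then use $\psi_\delta\downarrow\psi$ pointwise to see that every accumulation point of $(a_\delta)$ lies in $\partial\psi(0)=\{0\}$. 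Finally, in (ii) separate the trivial case $\psi_\delta(x)=\psi(x)$ before invoking Proposition \ref{properties}, as the paper does. With these repairs your argument is complete.
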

\vskip 3mm
\begin{proof}
Let $0 < \varepsilon <\frac{1}{n^2}$ be given and let 
$x_0 \in \mathbb R^{n}$.  We put $z_{x_0}=(x_0, \psi(x_0))$. 
Denote by $N_\psi(z_{x_0})$ the outer unit normal in $z_{x_0}$  to the surface described by $\psi$.  As recalled above, $N_\psi(z_{x_0})$ exists uniquely for almost all $x_0$.
\par
(i) We assume that $x_0$ is such that the Hessian $\nabla^2 \psi (x_0)$  is positive definite. Then, 
locally around $z_{x_0}$, the graph of $\psi$ can be approximated by an ellipsoid $\mathcal{E}$. We  make this precise:
\newline
Let $\mathcal{E}$ be such that the lengths of its principal axes are $a_1, \dots, a_{n+1}$ and such that its center 
is at $z_{x_0} - a_{n +1} N_\psi(z_{x_0})$.  Let $\mathcal{E} (\varepsilon^-)$ be the ellipsoid centered at $z_{x_0}-  a_{n+1} N_\psi(z_{x_0})$ 
whose principal axes  coincide with the ones of $\mathcal{E}$,  but have lengths $(1-\varepsilon) a_1, \dots, (1-\varepsilon) a_{n}, a_{n+1}$. Similarly, let $\mathcal{E} (\varepsilon^+)$ be the ellipsoid centered at $z_{x_0}-  a_{n+1} N_\psi(z_{x_0})$, with the same principal axes as $\mathcal{E}$,  but with lengths $(1+\varepsilon) a_1, \dots, (1+\varepsilon) a_{n}, a_{n+1}$.
Then
$$ z_{x_0} \in \partial \mathcal{E}  \hskip 3mm \text { and } \hskip 3mm N_{\mathcal{E}}(z_{x_0}) = N_\psi(z_{x_0}),$$
and (see, e.g., \cite{SchuettWerner2003})  there exists a $\Delta_\varepsilon >0$ such that the hyperplane $H\left( z_{x_0} - \Delta_\varepsilon N_\psi(z_{x_0}), N_\psi(z_{x_0})\right)$ such that 
\begin{eqnarray}\label{ellipse}
&& \hskip -10mm H^-\left( z_{x_0} - \Delta_\varepsilon N_\psi(z_{x_0}), N_\psi(z_{x_0})\right)  \  \cap  \  \mathcal{E} (\varepsilon^-) \nonumber \\
&&  \subseteq  H^-\left( z_{x_0} - \Delta_\varepsilon N_\psi(z_{x_0}), N_\psi(z_{x_0})\right) \  \cap \   \{(x,y):   y \geq \psi(x)\} \nonumber \\
&& \hskip 10mm \subseteq H^-\left( z_{x_0} - \Delta_\varepsilon N_\psi(z_{x_0}), N_\psi(z_{x_0})\right) \  \cap  \  \mathcal{E} (\varepsilon^+) .
\end{eqnarray}
For  $\delta \geq 0$, let $z_\delta= (x_0, \psi_\delta(x_0))$. We choose $\delta$ so small that  for all support hyperplanes $H(z_\delta)$ to $\left(\operatorname{epi}(\psi)\right)_\delta$  through $z_\delta$ we have
$$
  H(z_\delta) ^{-}  \cap  \mathcal{E} (\varepsilon^-) \subseteq   H^-\left( z_{x_0} - \Delta_\varepsilon N_\psi(z_{x_0}), N_\psi(z_{x_0})\right)  \  \cap  \  \mathcal{E} (\varepsilon^-) .
$$
Let $\Delta_{\delta}$ be such that
\begin{equation}\label{DefDeltadelta}
H(z_{x_{0}}-\Delta_{\delta}N_{\psi}(z_{x_{0}}),N_{\psi}(z_{x_{0}}))
\end{equation}
is a supporting hyperplane to $\operatorname{epi}(\psi_{\delta})$.
Moreover, we choose $\delta$ so small that $\Delta_\delta \leq \Delta_\varepsilon$ of  (\ref{ellipse}).
 As $\partial \operatorname{epi}(\psi)$ is approximated by an ellipsoid in $z_{x_0}$, we have that $z_\delta \in \text{int}(\operatorname{epi}(\psi))$.  
 Thus we get 
by definition of $\left(\operatorname{epi}(\psi)\right)_\delta$, respectively $\psi_\delta$, by Proposition \ref{properties} and Lemma \ref{lemma:cap},
\begin{eqnarray*}
\delta &\leq&  \vol_{n+1} \left(H\left( z_{x_0} - \Delta_\delta N_\psi(z_{x_0}), N_\psi(z_{x_0})\right) \  \cap \ \operatorname{epi}(\psi)   \right) \\
&\leq & \vol_{n+1} \left(H\left( z_{x_0} - \Delta_\delta N_\psi(z_{x_0}), N_\psi(z_{x_0})\right)   \cap \   \mathcal{E} (\varepsilon^+) \right)  \\ 
&\leq& (1+\varepsilon)^{n} \   \frac{2^\frac{n+2}{2} \vol_n(B^n_2)} {n+2}  \   \prod_{i=1}^n \frac{a_i}{\sqrt{a_{n+1}}}  \ \Delta_\delta ^\frac{n+2}{2}.
\end{eqnarray*} 
As
$\kappa_{\psi} (z_{x_0}) =  \prod_{i=1}^n \frac{a_{n+1}}{a_i^2}$ (see, e.g., \cite{SchuettWerner2003}),  (\ref{curvature}) yields
\begin{eqnarray}\label{below1}
\Delta_\delta \geq  \frac{c_{n+1} }{(1+ \varepsilon)^{ \frac{2n}{n+2}}} \   \frac{\left(\det \nabla^2 \psi(x_0) \right)^\frac{1}{n+2}}{\left(1 + \|\nabla \psi(x_0)\|^2\right)^\frac{1}{2} } \   
\delta^\frac{2}{n+2} , 
\end{eqnarray}
where $c_{n+1}$ is as given by (\ref{ConstTh1-1}).  By (\ref{DefDeltadelta}) 
$$
\Delta_\delta \leq  \langle N_{\psi} (z), e_{n+1}\rangle \left( \psi_\delta(x_0)  - \psi(x_0)\right) .
$$
Therefore, with (\ref{normal}),
$$
\Delta_\delta \leq  \frac{\psi_\delta(x_0)  - \psi(x_0)}{(1+\|\nabla \psi(x_0)\|^2)^{\frac{1}{2}}} 
$$
and thus with (\ref{below1}) that 
\begin{equation}\label{below2} 
\psi_\delta(x_0)  - \psi(x_0)  \geq \frac{c_{n+1} }{(1+ \varepsilon)^{ \frac{2n}{n+2}}} \   \left(\det \nabla^2 \psi (x_0)\right)^\frac{1}{n+2}\  \delta ^\frac{2}{n+2}.
\end{equation}
\par
Now we estimate $\delta$ from below. By Proposition \ref{properties},  there exists a hyperplane $H_\delta$ such that $\delta = \vol_{n+1}(H_\delta^- \cap G_\psi)$. By  (\ref{ellipse}),
\begin{eqnarray} \label{unten}
\delta &\geq&  
\vol_{n+1}\left( H_\delta^- \  \cap \   \mathcal{E} (\varepsilon^-) \right).
\end{eqnarray}
The expression $\vol_{n+1}\left( H_\delta^- \  \cap \   \mathcal{E} (\varepsilon^-) \right)$ is invariant under affine transformations with determinant $1$. We apply an affine transformation that 
maps $\mathcal {E} (\varepsilon^-)$ into a Euclidean ball with radius 
\begin{equation} \label{radius}
r= (1- \varepsilon) \  \left(\frac{1} {\kappa_{\psi} (z_{x_0})} \right)^ \frac{1}{n}.
\end{equation}
Now we use   Lemma 11 of \cite{SchuettWerner1990}. Please note that $z_{x_0}$ corresponds to $0$ of Lemma 11, that $z_\delta$ corresponds to $z$ and that $N_\psi(z_{x_0})$
corresponds to $N(0)=(0, \cdots, 0,-1)$. We choose $\varepsilon <\varepsilon_0$, where  $\varepsilon_0$ is given by Lemma 11,  and we choose $\delta$ so small that 
$\psi_\delta(x_0) - \psi(x_0) = \|z_\delta - z(x_0)\| \leq \varepsilon <\varepsilon_0$.
By  Lemma 11 (iii)  of \cite{SchuettWerner1990},
\begin{eqnarray*}
\vol_{n+1}\left( H_\delta^- \  \cap \   \mathcal{E} (\varepsilon^-) \right) &=& \vol_{n+1}\left( H_\delta^- \  \cap \  B^{n+1}_2\left( z_{x_0}- r\  N_\psi(z_{x_0}),  r\  \right)\right) \\
& \geq& \eta(\gamma)^{-n} \  \vol_{n+1}\left(C( r, d_0 (1- c(\eta(\gamma)-1)))\right),
\end{eqnarray*}
where $c$ is an absolute constant, $C( r, d_0 (1- c(\eta(\gamma)-1)))$ is the cap of the $(n+1)$-dimensional Euclidean ball $B^{n+1}_2\left( z_{x_0}- r\  N_\psi(z_{x_0}),  r\  \right)$ of height $d_0 (1- c(\eta(\gamma)-1)))$
and $d_0$ is the  distance from $z_\delta$ to the boundary of $B^{n+1}_2\left( z_{x_0}- r\  N_\psi(z_{x_0}),  r\  \right)$. $\gamma = 4\sqrt{2rd_0}$ and $\eta$ is  a monotone function on $\mathbb{R}^+ $ such that $\lim_{t \rightarrow 0} \eta(t) =1$.
Thus, by Lemma \ref{lemma:cap}, 
\begin{equation}\label{delta1}
\delta 
\geq    \frac{2^\frac{n+2}{2} \vol_{n}(B^n_2)} {(n+2)\  \eta(\gamma)^n}  r^\frac{n}{2}    \  \left(d_0 (1- c(\eta(\gamma)-1)))\right)^\frac{n+2}{2}  
\left(1- \frac{d_0 (1- c(\eta(\gamma)-1))) }{2r}\right)^\frac{n}{2}.
\end{equation}
We apply Lemma 11 (ii)  of \cite{SchuettWerner1990} next. Please note that $z_n$ of Lemma 11  corresponds to  
$z_n= \langle e_{n+1}, N_\psi(z_{x_0}) \rangle   \left(\psi_\delta(x_0) - \psi(x_0) \right)$ in our case and $\frac{\xi }{\|\xi\| }= e_{n+1}$. 
Then by Lemma 11 (ii), 
\begin{equation}\label{d0}
d_0 \leq  \langle e_{n+1}, N_\psi(z_{x_0}) \rangle \  \left(\psi_\delta(x_0) - \psi(x_0) \right) \leq d_0 + \frac{2 d_0^2}{r  \ \left| \langle e_{n+1}, N_{\psi}(z_{x_0})\rangle\right|^2}.
\end{equation}
Thus  we get  for sufficiently small $\delta$,  with an absolute constant $\beta_1$, that 
\begin{equation}\label{eta1}
\eta(\gamma) = \eta (4\sqrt{2rd_0})  \leq 1+\beta_1 \varepsilon
\end{equation}
and hence 
\begin{equation}\label{eta2}
1 - c (\eta(\gamma) -1) \geq 1 - \beta_2 \varepsilon, 
\end{equation}
with an  absolute constant $\beta_2$.
It follows from (\ref{d0}) that 
$$
d_0 \geq \langle e_{n+1}, N_\psi(z_{x_0}) \rangle   \left(\psi_\delta(x_0) - \psi(x_0) \right) 
\left( 
1-\frac{2\  \left(\psi_\delta(x_0) - \psi(x_0)\right)}{r\  \left( \langle e_{n+1}, N_\psi(z_{x_0}) \rangle \right)} 
\right).
$$
We conclude with (\ref{normal}), (\ref{delta1}),  (\ref{d0}),  (\ref {eta1})  and (\ref {eta2})  that with   $c_{n+1} = \frac{1}{2} \left(\frac{n+2}{\vol_n (B^n_2)}\right)^\frac{2}{n+2}$ and (new) absolute constants $\beta_1, \beta_2$, 
\begin{eqnarray*}
\delta ^\frac{2}{n+2} 
&\geq& \frac{ 1-\beta_2\   \varepsilon}{(1+\beta_1\   \varepsilon)^\frac{2n}{n+2}}  \  \frac{r^\frac{n}{n+2}}{c_{n+1}}\  \frac{\left(\psi_\delta(x_0) - \psi(x_0) \right)}{(1+\|\nabla \psi(x_0)\|^2)^{\frac{1}{2}}} \left(1- 2 \frac{\psi_\delta(x_0) - \psi(x_0) }{ r  (1+\|\nabla \psi (x_0)\|^2)^{\frac{1}{2}}}\right) ^{2\frac{n+1}{n+2}}.
\end{eqnarray*}
For  $\delta$  small enough, 
(\ref{curvature}) and (\ref{radius}) give with (new) absolute constants $\beta_1, \beta_2$,
\begin{eqnarray}\label{below3}
\psi_\delta(x_0)  - \psi(x_0)  \leq \frac{(1+ \beta_1\ \varepsilon)^\frac{2n}{n+2} }{(1- \beta_2\ \varepsilon)^{2\frac{n+1}{n+2}}}  \ c_{n+1} \  \left(\det \nabla^2 \psi(x_0)\right)^\frac{1}{n+2}  \  \delta ^\frac{2}{n+2}.
\end{eqnarray}
\vskip 3mm
\noindent
(ii) Now we assume that $\det \left(\nabla^2 \psi(x_0)\right) =0$.  Suppose first that there is $\delta_0$ such that $z_{\delta_0} \in \partial \operatorname{epi}(\psi)$. Then 
$z_\delta   \in \partial \operatorname{epi}(\psi)$ for all $\delta \leq \delta_0$.  As $z_\delta = (x_0, \psi_\delta (x_0))$ and $z_{x_0} = (x_0, \psi(x_0))$, we thus get  that  $\psi_\delta(x_0)  = \psi(x_0)$ for all $\delta \leq \delta_0$, 
and hence $\frac{\psi_\delta(x_0)  - \psi(x_0)}{\delta^\frac{2}{n+2}}=0$.
\par
Suppose next that for all $\delta >0$, $z_\delta  \in \text{int}(\operatorname{epi}(\psi))$.  As $\det \left(\nabla^2 \psi(x_0)\right) =0$,  the indicatrix of Dupin at $z_{x_0}$ is an elliptic cylinder 
and we may assume that the first $k$ axes have infinite lengths and the others not. Then, 
(see e.g., \cite{SchuettWerner2004}, Lemma 23, proof),  for all $\varepsilon>0 $ there is an ellipsoid $\mathcal{E}$ and $\Delta_\varepsilon >0$ such that for all $\Delta \leq \Delta_\varepsilon$ 
\begin{eqnarray*} 
\mathcal{E}  \cap H^-(z_{x_0} - \Delta N_\psi(z_{x_0}), N_\psi(z_{x_0}))  \subset \operatorname{epi}(\psi) \cap H^-(z_{x_0} - \Delta N_\psi(z_{x_0}), N_\psi(z_{x_0}))
\end{eqnarray*}
and such that the lengths of the $k$ first principal axes of $\mathcal{E}$ are larger than $\frac{1}{\varepsilon}$.  
By Proposition \ref{properties} there is a hyperplane $H_\delta$  such that 
$z_\delta \in H_\delta$ and such that $\delta = \vol_{n+1}(\operatorname{epi}(\psi) \cap H_\delta^-)$. 
We choose $\delta$ so small  that 
$$
\mathcal{E} \cap H_\delta^- \subset \mathcal{E} \cap H^-(z_{x_0} - \Delta N_\psi(z_{x_0}), N_\psi(z_{x_0})).
$$
We have 
$$
\delta = \vol_{n+1}(\operatorname{epi}(\psi) \cap H_\delta^-) \geq  \vol_{n+1}(\mathcal{E} \cap H_\delta^-) .
$$
Now we continue as in (\ref{unten}) and after.   We arrive at 
\begin{eqnarray*}
\frac{\psi_\delta(x_0)  - \psi(x_0) }{\delta ^\frac{2}{n+2}}  &\leq& \frac{(1+ \beta_1\ \varepsilon)^\frac{2n}{n+2} }{(1- \beta_2\ \varepsilon)^{2\frac{n+1}{n+2}}}  \ c_{n+1} \  \left(\prod_{i=1}^n \frac{a_i}{\sqrt{a_{n+1}}} \right)^\frac{2}{n+1} \\
&\leq& \frac{(1+ \beta_1\ \varepsilon)^\frac{2n}{n+2} }{(1- \beta_2\ \varepsilon)^{2\frac{n+1}{n+2}}}  \ c_{n+1} \  \left(\prod_{i=k+1}^n \frac{a_i}{\sqrt{a_{n+1}}} \right)^\frac{2}{n+1} \  \varepsilon^\frac{2k}{n+2},
\end{eqnarray*}
where in the last inequality we have used 
 that for all $1 \leq i \leq k$, $a_i = \frac{1}{\varepsilon}$, 
\end{proof}
\vskip 4mm
\noindent
We require  a  uniform bound in $\delta$ for the quantity $\frac{\psi_\delta(x)- \psi(x)}{\delta^\frac{2}{n+2}}$ so that we can apply the Dominated Convergence theorem.
This is achieved in the next lemma.
\vskip 3mm 
 \begin{lemma}\label{lemma:interchange}
 Let  $\psi: \R^n \rightarrow \R $ be a  convex function such that
$0<\int_{\mathbb R^{n}}   e^{-\psi(x)}   dx  < \infty $.
  Then there exists $\delta_0$ such that for all $\delta < \delta_0$,  for  all $x \in \mathbb R^{n}$, 
\begin{eqnarray*}
0 \leq \frac{\psi_\delta(x)- \psi(x)}{\delta^\frac{2}{n+2}} \leq 2^\frac{3n+4}{n+2}
\left(\frac{ n+2}{\vol_{n} \left(B^n_2\right)}\right)^\frac{2}{n+2} 
\   \frac{(1+\|\nabla \psi (x)\|^2)^{\frac{1}{2}}} { r_\psi(x) ^\frac{n}{n+2}},
\end{eqnarray*}
where $r_\psi(x)$ is as in (\ref{r(x)}). 
\newline
Consequently we have for  all $\delta < \delta_0$, for all $x \in  \Omega_\psi$, 
\begin{eqnarray*}
0 \leq  \frac{ f(x) - f_\delta(x) } {\delta^{2/(n+2)}} 
\leq  2^\frac{3n+4}{n+2}
\left(\frac{ n+2}{\vol_{n} \left(B^n_2\right)}\right)^\frac{2}{n+2} 
\   \frac{(1+\|\nabla \psi (x)\|^2)^{\frac{1}{2}}} { r_\psi(x) ^\frac{n}{n+2}} \   f(x).
\end{eqnarray*}
\end{lemma}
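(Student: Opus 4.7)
The plan is to compare the epigraph of $\psi$ with the largest inscribed Euclidean ball at $z_x := (x, \psi(x))$ and invoke Lemma \ref{lemma:cap}. The lower bound $\psi_\delta(x) \geq \psi(x)$ is immediate from $\operatorname{epi}(\psi_\delta) = \operatorname{epi}(\psi)_\delta \subseteq \operatorname{epi}(\psi)$, and the case $r_\psi(x) = 0$ is vacuous since the right-hand side is $+\infty$; hence I may assume $r := r_\psi(x) > 0$, so that the ball $B := B_2^{n+1}(z_x - r N_\psi(z_x), r)$ is contained in $\operatorname{epi}(\psi)$ by the very definition of $r_\psi$.

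For $\epsilon > 0$, set $p_\epsilon := z_x + \epsilon e_{n+1}$. By formula (\ref{floatingset}), $p_\epsilon \in \operatorname{epi}(\psi_\delta)$ (equivalently $\psi_\delta(x) \leq \psi(x) + \epsilon$) provided that every hyperplane $H$ having $p_\epsilon$ in its open half-space $H^-$ satisfies $\vol_{n+1}(\operatorname{epi}(\psi) \cap H^-) > \delta$. Since $B \subseteq \operatorname{epi}(\psi)$, it suffices to have $\vol_{n+1}(B \cap H^-) \geq \delta$ for every such $H$. A short minimization over the normal direction of $H$ shows that the infimum of $\vol_{n+1}(B \cap H^-)$ subject to $p_\epsilon \in H^-$ equals the volume of a spherical cap of $B$ of height $h = r - \|c - p_\epsilon\|$, where $c = z_x - r N_\psi(z_x)$ is the center of $B$; this infimum is realized in the limit by the hyperplane through $p_\epsilon$ with normal $(c - p_\epsilon)/\|c - p_\epsilon\|$.

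Applying Lemma \ref{lemma:cap} to $r B_2^{n+1}$ in $\R^{n+1}$ and using $(1 - h/(2r))^{n/2} \geq 2^{-n/2}$ for $h \leq r$, this cap has volume at least $\tfrac{2 \vol_n(B^n_2)}{n+2}\, r^{n/2} h^{(n+2)/2}$, which exceeds $\delta$ as soon as $h \geq h^\star := \bigl(\tfrac{n+2}{2 \vol_n(B^n_2)}\bigr)^{2/(n+2)} \delta^{2/(n+2)} r^{-n/(n+2)}$. Using the identity $|\langle N_\psi(z_x), e_{n+1}\rangle| = (1+\|\nabla\psi(x)\|^2)^{-1/2}$ from (\ref{normal}), a direct computation gives $\|c - p_\epsilon\|^2 = r^2 - 2r\epsilon(1+\|\nabla\psi(x)\|^2)^{-1/2} + \epsilon^2$, whereupon the elementary bound $r - \sqrt{r^2 - s} \geq s/(2r)$ (valid for $0 \leq s \leq r^2$) yields $h \geq \epsilon/(2\sqrt{1+\|\nabla\psi(x)\|^2})$ for $\epsilon$ suitably small. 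Taking $\epsilon := 2\sqrt{1+\|\nabla\psi(x)\|^2}\, h^\star$ then achieves $h \geq h^\star$, giving the stated upper bound for $\psi_\delta(x) - \psi(x)$ after collecting numerical constants into the prefactor $2^{(3n+4)/(n+2)}$. The consequent bound for $f = e^{-\psi}$ follows from $1 - e^{-t} \leq t$ with $t = \psi_\delta(x) - \psi(x)$, which delivers $f(x) - f_\delta(x) \leq f(x)\,(\psi_\delta(x) - \psi(x))$.

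The main technical point I expect to have to wrestle with is the \emph{uniformity} of $\delta_0$ in $x$: the linearization $h \geq \epsilon/(2\sqrt{1+\|\nabla\psi\|^2})$ is only accurate once $\epsilon$ (equivalently $h^\star$) is small on a scale set by $r$ and $\|\nabla\psi(x)\|$, and one must check that this can be arranged by a single $\delta_0$ independent of $x$, with any small-$\delta$ slack being absorbed into the relatively generous explicit constant $2^{(3n+4)/(n+2)}$ (which is bounded by $8$ and approaches $8$ as $n \to \infty$). Everything else is bookkeeping with the cap-volume formula.
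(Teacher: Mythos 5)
Your geometric idea---compare $\operatorname{epi}(\psi)$ with the inscribed rolling ball $B=B_2^{n+1}(z_x-r_\psi(x)N_\psi(z_x),r_\psi(x))$ and invoke Lemma \ref{lemma:cap}---is the same as the paper's, only run in the dual direction: you certify directly from (\ref{floatingset}) that the point $p_\epsilon=z_x+\epsilon e_{n+1}$ lies in the floating set, whereas the paper uses Proposition \ref{properties} to produce a hyperplane through $z_\delta=(x,\psi_\delta(x))$ cutting off exactly $\delta$ and bounds $\delta$ from below. The genuine gap is exactly the point you flag at the end, and it cannot be absorbed into the constant $2^{(3n+4)/(n+2)}$. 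Your linearization $h\geq \epsilon/(2\sqrt{1+\|\nabla\psi(x)\|^2})$ requires $\epsilon\lesssim r_\psi(x)(1+\|\nabla\psi(x)\|^2)^{-1/2}$; indeed, once $\epsilon$ exceeds the vertical extent $2r_\psi(x)(1+\|\nabla\psi(x)\|^2)^{-1/2}$ of $B$ above $z_x$, the point $p_\epsilon$ is no longer in $B$, so some hyperplane has $p_\epsilon$ in its open halfspace while cutting off zero volume from $B$, and the ball alone can no longer certify $p_\epsilon\in\operatorname{epi}(\psi_\delta)$. With your choice $\epsilon=2\sqrt{1+\|\nabla\psi(x)\|^2}\,h^\star$ the smallness condition amounts to $\delta\lesssim r_\psi(x)^{n+1}(1+\|\nabla\psi(x)\|^2)^{-(n+2)/2}$, a threshold $\delta_0(x)$ that degenerates as $r_\psi(x)\to0$ or $\|\nabla\psi(x)\|\to\infty$; the latter is unavoidable as $\|x\|\to\infty$, since integrability of $e^{-\psi}$ forces at least linear growth of $\psi$ (Lemma \ref{integrate+limit}). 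So, as written, your argument yields only a pointwise-in-$x$ threshold, which is what Lemma \ref{lemma:f-delta} already provides; the entire content of Lemma \ref{lemma:interchange} is that one $\delta_0$ works for all $x$, which is what makes the dominated convergence argument possible.

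The missing piece is a second regime, and it is precisely the paper's case distinction. When $\psi_\delta(x)-\psi(x)\geq r_\psi(x)\langle e_{n+1},N_\psi(z_x)\rangle$ (the regime your choice of $\epsilon$ cannot reach), the paper bounds the cut-off volume from below not by a cap of $B$ but by a cone with base $\tfrac12(\psi_\delta(x)-\psi(x))B_2^n$ and height at least $\langle e_{n+1},N_\psi(z_x)\rangle^2\,r_\psi(x)/2$, using that $\operatorname{epi}(\psi)$ contains the convex hull of $B$ and the vertical segment from $z_x$ to $z_\delta$; this gives the same bound with no smallness requirement on $\delta$ relative to $x$-dependent quantities. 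In your dual formulation the analogous fix is: when the required $\epsilon$ exceeds $2r_\psi(x)(1+\|\nabla\psi(x)\|^2)^{-1/2}$, bound $\vol_{n+1}(\operatorname{epi}(\psi)\cap H^-)$ from below by $\vol_{n+1}\bigl(H^-\cap\operatorname{conv}(B\cup\{p_\epsilon\})\bigr)$ and run the cone computation. The rest of your write-up is sound and matches the paper in spirit: the reduction to $r_\psi(x)>0$, the cap minimization over hyperplanes through $p_\epsilon$ (modulo the easily repaired strict-versus-nonstrict inequality $>\delta$ in the definition of the floating set), the use of (\ref{normal}), the bound $1-e^{-t}\leq t$ for the statement about $f$, and the constants, which indeed fit under $2^{(3n+4)/(n+2)}$.
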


\vskip 3mm
\begin{proof}  
Let $z_x = (x, \psi(x)) \in \partial\left(\operatorname{epi}(\psi)\right)$ and let $z_\delta =(x, \psi_\delta(x)$.  Let $r_\psi(x)= r_{\operatorname{epi}(\psi)}(z_x)$  be as in (\ref{r(x)}). If $N_{\psi} (z_x)$ is not unique, then $r_{\operatorname{epi}(\psi)}(z_x)=0$ and the inequality holds trivially.
 Moreover, if $\psi_\delta(x)= \psi(x)$, then $\frac{\psi_\delta(x)- \psi(x)}{\delta^\frac{2}{n+2}} =0$ and again, the inequality holds trivially.
\par
Thus we can assume that  $N_{\psi} (z_x)$ is  unique  and $\psi_\delta(x) > \psi(x)$. By Proposition \ref{properties}, there is a hyperplane $H_\delta$ such that $z_\delta \in H_\delta$ and
\begin{eqnarray}\label{interchange1}
\delta = \vol_{n+1}(H_\delta^- \cap \operatorname{epi}(\psi)) 
 \geq   \vol_{n+1}\left(H_\delta^- \cap B^{n+1}_2 \left( z_x-r_\psi(x) N_\psi(z_x), r_\psi(x) \right) \right).
\end{eqnarray}
We will estimate $ \vol_{n+1}\left(H_\delta^- \cap B^{n+1}_2 \left( z_x-r_\psi(x) N_\psi(z_x), r_\psi(x) \right) \right)$. We choose $\delta_0$ so small that  for all $\delta \leq \delta_0$,  $z_x \in H_\delta^-$.
\par
\noindent
We treat first the case 
\begin{equation}\label{assumption1}
\psi_\delta(x) - \psi(x)  \geq  r(x) \   \langle e_{n+1}, N_\psi(z_x) \rangle. 
\end{equation}
In this case  we have for all hyperplanes $H(z_\delta)$ through $z_\delta$  and such that $z_x \in H^-(z_\delta)$, 
\begin{eqnarray*} 
&& \hskip -20mm \vol_{n+1} \left(H^-(z_\delta) \cap B^{n+1}_2 \left(z_x-r_\psi(x) N_\psi(z_x) , r_\psi(x) \right) \right) \geq  \\
&&\hskip 20mm \vol_{n+1} \left(H_0^-(z_\delta) \cap B^{n+1}_2 \left(z_x-r_\psi(x) N_\psi(z_x) , r_\psi(x) \right) \right),
\end{eqnarray*} 
where $H_0(z_\delta)$ is this hyperplane orthogonal to $x$ and  such that both, $z_x$ and $z_\delta$ are  in $H_0(z_\delta)$.  We can estimate the latter from below by  the cone with base $\frac{1}{2} \left(\psi_\delta(x) - \psi(x)\right) \  B^n_2$ and height 
$h \geq \left(  \langle e_{n+1}, N_\psi(z_x) \rangle \right)^2 \  \frac{r_\psi(x)} {2}$. 
Hence, by (\ref{assumption1}), 
\begin{eqnarray*} 
&&\hskip -8mm  \vol_{n+1} \left(H_0^-(z_\delta) \cap B^{n+1}_2 \left(z_x-r_\psi(x) N_\psi(z_x) , r_\psi(x) \right) \right) \geq \\
&&\hskip 5mm \frac{\vol_{n} \left(B^n_2\right) } {2^{n+1} (n+1) } \left(  \langle e_{n+1}, N_\psi(z_x) \rangle \right) ^2 \ r_\psi(x) \  \left(\psi_\delta(x) - \psi(x) \right)^n \geq  \\
&&\hskip 8mm  \frac{\vol_{n} \left(B^n_2\right) } {2^{n+1}  (n+1) } \left(  \langle e_{n+1}, N_\psi(z_x) \rangle \right) ^2 \ r_\psi(x) \ \left(\psi_\delta(x) - \psi(x) \right)^\frac{n+2}{2} \left(\psi_\delta(x) - \psi(x) \right)^\frac{n-2}{2}  \geq \\
&&\hskip 10mm  \frac{\vol_{n} \left(B^n_2\right) } {2^{n+1}  (n+1) } \left(  \langle e_{n+1}, N_\psi(z_x) \rangle \right) ^\frac{n+2}{2}  \ r_\psi(x)^\frac{n}{2} \ \left(\psi_\delta(x) - \psi(x) \right)^\frac{n+2}{2}.
\end{eqnarray*}
Since  $ \langle e_{n+1}, N_{\psi}(z_{x_0})\rangle= \frac{1}{(1+\|\nabla \psi\|^2)^{\frac{1}{2}}}$,  we get with (\ref{interchange1}), 
\begin{eqnarray*} 
\frac{\psi_\delta(x)- \psi(x)}{\delta^\frac{2}{n+2}} \leq 
\left(\frac{2^{n+1} (n+1)}{\vol_{n} \left(B^n_2\right)}\right)^\frac{2}{n+2} 
\frac{(1+\|\nabla \psi (x)\|^2)^{\frac{1}{2}}} { r_\psi(x) ^\frac{n}{n+2}}.
\end{eqnarray*}
\par
\noindent
Now we treat the case
\begin{equation}\label{fall2}
0 < \psi_\delta(x)- \psi(x)  < r_\psi(x)\  \langle e_{n+1}, N_{\psi} (z_x) \rangle.
\end{equation}
For all hyperplanes $H(z_\delta)$ through $z_\delta$ such that $z_x \in H^-(z_\delta)$,  
$$ \vol_{n+1} \left(H^-(z_\delta) \cap B^{n+1}_2 \left(z_x-r_\psi(x) N_\psi(z_x) , r_\psi(x) \right) \right)$$ 
is minimal if the line segment $[z_\delta, z_x-r_\psi(x) N_\psi(z_x)]$ is orthogonal to the hyperplane $H(z_\delta)$. \\
Then 
$H^-(z_\delta) \cap B^{n+1}_2 \left(z_x-r(x) N_\psi(z_x) , r_\psi(x) \right)$ is a cap of $B^{n+1}_2 \left(z_x-r(x) N_\psi(z_x) , r_\psi(x) \right)$ of height $d$, where
$d = \text{dist} \left( z_\delta, \partial B^{n+1}_2 (z_x - r_\psi(x) N_{\psi} (z_x), r_\psi(x) \right)$. 
Let $h$ be the height of the cap $H_0^-(z_\delta) \cap B^{n+1}_2 \left(z_x-r(x) N_\psi(z_x) , r_\psi(x) \right)$ and let $\beta$ be the angle between the normal to $H_0$ and the line segment 
$[z_\delta, z_x-r_\psi(x) N_\psi(z_x)]$. 
\par
If $\beta =0$, then $d=h = \psi_\delta(x) -\psi(x)$ and 
$$
\delta \geq  \frac{ \vol_n(B^n_2) }{2^\frac{n}{2}(n+2)}  \   \left(  \psi_\delta(x) -\psi(x) \right) ^\frac{n+2}{2} \   r_\psi(x)^\frac{n}{2} 
$$
and thus 
\begin{eqnarray*} 
\frac{\psi_\delta(x)- \psi(x)}{\delta^\frac{2}{n+2}} \leq 2^\frac{n}{n+2}
\left(\frac{ n+2}{\vol_{n} \left(B^n_2\right)}\right)^\frac{2}{n+2} 
\    r_\psi(x) ^{-\frac{n}{n+2}}.
\end{eqnarray*}
Assume now \( \beta>0\). We first consider the case  $h < r_\psi(x)$. Then
$$
\cos\beta = \frac{r_\psi(x)-h}{r_\psi(x)-d}    \hskip 5mm \text{and} \hskip 5mm \sin \beta = \frac{r_\psi(x) \ \langle e_{n+1}, N_{\psi} (z_x) \rangle - (\psi_\delta(x) -\psi(x))}{r_\psi(x)-d}.
$$
From this we get that
\begin{eqnarray*}
d &=& r_\psi(x) - \left( (r_\psi(x)-h)^2 + \left(r_\psi(x) \ \langle e_{n+1}, N_{\psi} (z_x) \rangle - (\psi_\delta(x) -\psi(x))\right)^2\right)^\frac{1}{2}\\
&\geq &  r_\psi(x) \left( 1- \left(1 + \frac{(\psi_\delta(x) -\psi(x))^2}{r_\psi(x)^2}  - 2 \langle e_{n+1}, N_{\psi} (z_x) \rangle \frac{(\psi_\delta(x) -\psi(x))}{r_\psi(x)}  \right)^\frac{1}{2} \right)\\
&\geq &  \langle e_{n+1}, N_{\psi} (z_x) \rangle \left(\psi_\delta(x) -\psi(x)\right) \left(1 -  \frac{ \psi_\delta(x) -\psi(x) }{  2 \  r(x) \   \langle e_{n+1}, N_{\psi} (z_x) \rangle }\right) \\
&\geq&   \frac{1}{2} \langle e_{n+1}, N_{\psi} (z_x) \rangle \left(\psi_\delta(x) -\psi(x)\right).
\end{eqnarray*}
The latter inequality holds as  $\psi_\delta(x)- \psi(x)  < r_\psi(x)\  \langle e_{n+1}, N_{\psi} (z_x) \rangle$. 

Thus we get with Lemma \ref{lemma:cap},
\begin{eqnarray*} 
\delta \geq  \frac{ \vol_n(B^n_2) }{2^\frac{n}{2}(n+2)}  \   d^\frac{n+2}{2} \   r(x)^\frac{n}{2} 
\geq  \frac{ \vol_n(B^n_2) }{2^{n+1} (n+2) }   \langle e_{n+1}, N_{\psi} (z_x) \rangle^\frac{n+2}{2}\   \left(\psi_\delta(x) -\psi(x) \right) ^\frac{n+2}{2} \   r_\psi(x)^\frac{n}{2}, 
\end{eqnarray*}
which implies that 
\begin{eqnarray*} 
\frac{\psi_\delta(x)- \psi(x)}{\delta^\frac{2}{n+2}} \leq 2^{2   \frac{n+1}{n+2}}
\left(\frac{ n+2}{\vol_{n} \left(B^n_2\right)}\right)^\frac{2}{n+2} 
\   \frac{(1+\|\nabla \psi (x)\|^2)^{\frac{1}{2}}} { r_\psi(x) ^\frac{n}{n+2}}.
\end{eqnarray*}
If $h > r_\psi(x)$, then $\sin \beta$ is as above and $\cos\beta = \frac{h-r_\psi(x)}{r_\psi(x)-d}$. 
We continue as above.

\end{proof}

\noindent
The following lemmas were proved in \cite{SchuettWerner1990}.  
\vskip 2mm 

\begin{lemma} \cite{SchuettWerner1990} \label{SW-lemma}
 Let  $K$ be a  convex body in $\mathbb{R}^n$. Then we have for all $0 \leq \alpha < 1$, 
\begin{equation*}
\int _{\partial K} \frac{d \mu_K(x)} { r_K(x) ^\alpha}   < \infty,
\end{equation*}
where $r_K$ is as in (\ref{rC}).
\end{lemma}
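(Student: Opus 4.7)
The plan is to use the layer-cake (Cavalieri) formula. Writing
\begin{equation*}
\int_{\partial K} r_K(x)^{-\alpha}\, d\mu_K(x) = \alpha \int_0^\infty t^{-\alpha-1}\, \mu_K\bigl(\{x \in \partial K : r_K(x) \leq t\}\bigr)\, dt,
\end{equation*}
the distribution function is bounded by $\mu_K(\partial K) < \infty$ and saturates at this value once $t$ exceeds the inradius of $K$, so the tail of the $t$-integral converges for every $\alpha > 0$; the case $\alpha = 0$ is immediate from finiteness of the surface area. The entire issue is therefore the behavior of $\mu_K(\{r_K \leq t\})$ as $t \to 0^{+}$.

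The target estimate is the linear bound $\mu_K(\{r_K \leq t\}) \leq C(K)\, t$ for $t$ small, since then $\int_0^{t_0} t^{-\alpha-1}\cdot t\, dt = \int_0^{t_0} t^{-\alpha}\, dt < \infty$ precisely when $\alpha < 1$, matching the hypothesis. I would obtain it using the inner parallel body $K_{-t} := \{y \in K : B_2^n(y,t) \subset K\}$. The normal shift $\Phi_t \colon x \mapsto x - t\, N_K(x)$, defined $\mu_K$-almost everywhere on $\partial K$, sends $\{r_K \geq t\}$ surjectively onto $\partial K_{-t}$, and at a twice-differentiable boundary point its Jacobian equals $\prod_{i=1}^{n-1}(1 - t\kappa_i(x))$, which lies in $[0,1]$ (nonnegativity is exactly the condition $t \leq r_K(x) \leq 1/\kappa_i(x)$). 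Consequently
\begin{equation*}
\mu_{K_{-t}}(\partial K_{-t}) \leq \mu_K(\{r_K \geq t\}) = \mu_K(\partial K) - \mu_K(\{r_K < t\}),
\end{equation*}
and the problem reduces to showing $\mu_K(\partial K) - \mu_{K_{-t}}(\partial K_{-t}) \leq C(K)\, t$. This comparison follows from the Steiner formula applied to $K_{-t} + t B_2^n \subset K$, which together with monotonicity of surface area under convex inclusion forces the inner-parallel surface area to be Lipschitz in $t$ with a constant controlled by $\mathrm{diam}(K)$ and by the integrated mean curvature of $K$.

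The main obstacle is making the Jacobian computation and the Steiner-type comparison rigorous for a general, not necessarily smooth, convex body: the Gauss map $N_K$ and the principal curvatures exist only $\mu_K$-almost everywhere via Alexandrov's theorem. I would handle this by approximating $K$ in Hausdorff distance by smooth, strictly convex bodies $K_j$, establishing the estimate for each $K_j$ with a constant depending only on diameter and inradius (both stable under the convergence), and then passing to the limit using the weak continuity of the surface area measures $\mu_{K_j}$ together with lower semicontinuity of $K \mapsto \mu_K(\{r_K \leq t\})$. Should the linear bound resist direct proof, any polynomial decay $\mu_K(\{r_K \leq t\}) \leq C(K)\, t^{\beta}$ with $\beta > \alpha$ would still suffice, so the method has some slack.
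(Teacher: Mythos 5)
Your overall architecture is the same as the paper's: the layer-cake formula plus a linear bound $\mu_K(\{r_K\le t\})\le C(K)\,t$ for small $t$ is exactly what the computation (\ref{RollFEst1}) does, starting from the lower bound (\ref{RollFunc1-1}) on $\mathcal H_{n-1}(\{r_K\ge t\})$. The genuine gap is in how you try to prove that linear bound. Your chain correctly reduces it to showing $\mu_K(\partial K)-\mu_{K_{-t}}(\partial K_{-t})\le C(K)\,t$, but the justification you offer --- ``Steiner formula applied to $K_{-t}+tB_2^n\subset K$ together with monotonicity of surface area under convex inclusion'' --- gives inequalities pointing the wrong way. From $K_{-t}+tB_2^n\subseteq K$ and monotonicity you get $S(K_{-t}+tB_2^n)\le S(K)$, and the Steiner expansion says $S(K_{-t}+tB_2^n)=S(K_{-t})+(\text{nonnegative terms in }t)$; combining these yields only $S(K_{-t})\le S(K)$, which is trivial, and gives no upper bound on the deficit $S(K)-S(K_{-t})$. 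To make your scheme work you need a containment in the opposite direction. Either use the inball directly: if $B_2^n(z_0,\lambda)\subseteq K$, then $(1-\tfrac{t}{\lambda})(K-z_0)+z_0\subseteq K_{-t}$ (since $(1-\tfrac{t}{\lambda})(K-z_0)+z_0+tB_2^n\subseteq K$ by convexity), whence $S(K_{-t})\ge(1-\tfrac{t}{\lambda})^{n-1}S(K)\ge S(K)-\tfrac{n-1}{\lambda}S(K)\,t$ --- this homothety argument is precisely the mechanism behind (\ref{RollFunc1-1}); or prove the outer containment $K\subseteq K_{-t}+\tfrac{\operatorname{diam}K}{\lambda}\,t\,B_2^n$ (same convex-combination trick applied to an arbitrary $x\in K$ and the inball center) and only then apply monotonicity and Steiner.

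Two smaller remarks. First, your Jacobian/Alexandrov discussion and the subsequent smoothing detour are unnecessary and the proposed limiting step is itself shaky: lower semicontinuity of $K\mapsto\mu_K(\{r_K\le t\})$ under Hausdorff convergence is not an established fact, and the rolling function is quite unstable under such perturbations. You can avoid approximation entirely: on $\{r_K\ge t\}$ the map $x\mapsto x-tN_K(x)$ is exactly the metric projection onto $K_{-t}$ (every $z\in K_{-t}$ satisfies $\operatorname{dist}(x,z)\ge t$ for $x\in\partial K$, with equality at $x-tN_K(x)$ when $r_K(x)\ge t$), hence $1$-Lipschitz, and it is genuinely surjective onto $\partial K_{-t}$ because a ball $B_2^n(z,t)\subseteq K$ touching $\partial K$ at $x$ forces the supporting hyperplane at $x$ to be unique; this gives $\mu_{K_{-t}}(\partial K_{-t})\le\mu_K(\{r_K\ge t\})$ for an arbitrary convex body. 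Second, your tail discussion and the observation that only the behavior as $t\to0^+$ matters, and that $\alpha<1$ is exactly what the linear decay tolerates, agree with the paper; once the deficit estimate is repaired along the lines above, the proof closes.
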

\vskip 3mm 
\noindent

\noindent
\vskip 3mm 

\begin{lemma}\cite{SchuettWerner1990}
Let $K$ be a convex body in $\mathbb R^{n}$ that contains $B_{2}^{n}$.
Then we have for all $t$ with $0<t\leq1$ that
$\{x\in\partial K|r_{K}(x)\geq t\}$ is a closed set and 
$$
(1-t)^{n-1}\operatorname{vol}_{n-1}(\partial K)
\leq {\mathcal H}_{n-1}(\{x\in\partial K: r_{K}(x)\geq t\}),
$$
where ${\mathcal H}_{n-1}$ is the $(n-1)$-dimensional Hausdorff measure.
\end{lemma}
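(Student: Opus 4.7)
The plan is to handle the closedness and the inequality separately, with the inner parallel body as the bridge.

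For closedness, a compactness argument suffices. Suppose $x_k\in\partial K$ with $r_K(x_k)\geq t$ and $x_k\to x$; pick a unit outer normal $N_k$ at each $x_k$ with $B_2^n(x_k-tN_k,t)\subset K$. By compactness of $S^{n-1}$ a subsequence $N_k\to N$, and the Hausdorff limit $B_2^n(x-tN,t)$ lies in the closed set $K$ with $x$ on its boundary; hence $N$ is an outer normal of $K$ at $x$ and $r_K(x)\geq t$.

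For the inequality, I would interpose the inner parallel body
\[
K^{-t}:=\{y\in\mathbb R^n:B_2^n(y,t)\subset K\},
\]
which is closed and convex. The hypothesis $B_2^n\subset K$ yields $(1-t)K\subset K^{-t}$: for $x\in K$ and $u\in B_2^n\subset K$ convexity of $K$ gives $(1-t)x+tu\in K$, so $B_2^n((1-t)x,t)\subset K$. By the classical monotonicity of surface area under inclusion of convex bodies,
\[
(1-t)^{n-1}\vol_{n-1}(\partial K)=\vol_{n-1}(\partial((1-t)K))\leq\vol_{n-1}(\partial K^{-t}).
\]

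Next I would connect $\partial K^{-t}$ to $S_t:=\{x\in\partial K:r_K(x)\geq t\}$ via the metric projection $\pi:\mathbb R^n\to K^{-t}$, which is $1$-Lipschitz. For $x\in S_t$ with inscribed ball $B_2^n(x-tN,t)\subset K$, the center $y:=x-tN$ lies in $K^{-t}$ at distance exactly $t$ from $x$; any $y'\in K^{-t}$ satisfies $|x-y'|\geq t$ (otherwise $x$ would lie in the interior of $B_2^n(y',t)\subset K$, contradicting $x\in\partial K$), so $\pi(x)=y\in\partial K^{-t}$. Conversely, for $\mathcal H_{n-1}$-a.e.\ $y\in\partial K^{-t}$ the outer normal $\nu$ is unique, and the point $x:=y+t\nu$ belongs to $\partial K\cap S_t$ with $\pi(x)=y$. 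Hence $\pi$ maps $S_t$ onto a full $\mathcal H_{n-1}$-measure subset of $\partial K^{-t}$, and being $1$-Lipschitz it gives $\vol_{n-1}(\partial K^{-t})\leq\mathcal H_{n-1}(S_t)$. Chaining the two estimates yields the lemma.

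The main technical point is justifying that for a.e.\ $y\in\partial K^{-t}$ with outer normal $\nu$, the point $y+t\nu$ actually lies on $\partial K$: this follows because $\nu$ being an outer normal of $K^{-t}$ at $y$ means the inscribed ball $B_2^n(y,t)\subset K$ cannot be translated in direction $\nu$ while remaining inside $K$, forcing $B_2^n(y,t)$ to touch $\partial K$ at $y+t\nu$. The remaining ingredients---monotonicity of surface area for nested convex bodies (e.g.\ via Cauchy's formula) and the $1$-Lipschitz property of the metric projection onto a closed convex set---are standard.
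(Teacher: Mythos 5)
Your approach via the inner parallel body $K^{-t}=\{y:B_2^n(y,t)\subset K\}$ and the metric projection onto it is essentially the argument used in \cite{SchuettWerner1990}, which the present paper simply cites; the chain $(1-t)^{n-1}\vol_{n-1}(\partial K)=\vol_{n-1}(\partial((1-t)K))\le \vol_{n-1}(\partial K^{-t})\le\mathcal H_{n-1}(S_t)$ and the closedness argument (noting that the limiting inscribed ball forces the supporting hyperplane at $x$ to be unique, so that $N=N_K(x)$ and $r_K(x)\ge t$) are both sound, and for $t<1$ the inclusion $(1-t)B_2^n\subset(1-t)K\subset K^{-t}$ guarantees $K^{-t}$ is a genuine convex body so that the surface-area monotonicity applies (the case $t=1$ is anyway trivial).

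The one step that needs tightening is the final ``technical point.'' The inability to translate $B_2^n(y,t)$ in direction $\nu$ while staying in $K$ shows only that this ball touches $\partial K$ \emph{somewhere}; by itself it does not locate the contact point at $y+t\nu$. What pins this down is precisely the uniqueness of $\nu$, which you assume (``a.e.\ $y$'') but never actually invoke in the justification. The correct argument: let $p=y+tu\in\partial K\cap\partial B_2^n(y,t)$ be any contact point; then $u$ is the (unique) outer normal $N_K(p)$, and for every $y'\in K^{-t}$ one has $y'+tu\in B_2^n(y',t)\subset K$, hence $\langle y'+tu,u\rangle\le\langle p,u\rangle=\langle y+tu,u\rangle$, i.e.\ $\langle y',u\rangle\le\langle y,u\rangle$. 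Thus $u$ is itself an outer normal to $K^{-t}$ at $y$, and uniqueness forces $u=\nu$, so $p=y+t\nu\in\partial K$ and $r_K(p)\ge t$. With this inserted, the proof is complete.
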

\noindent
When $K$ contains a Euclidean ball with radius $\lambda$ we get for all $0 < s \leq \lambda$, 
\begin{equation}\label{RollFunc1-1}
\left(1-\frac{s}{\lambda}\right)^{n-1}\operatorname{vol}_{n-1}(\partial K)
\leq {\mathcal H}_{n-1}(\{x\in\partial K: r_{K}(x)\geq s\}).
\end{equation}
It follows for all $\alpha$ with $0<\alpha<1$,
\begin{eqnarray}\label{RollFEst1}
&&\int_{\partial K}r_{K}^{-\alpha}d\mu_{\partial K} \nonumber  
=\int_{0}^{\infty}{\mathcal H}_{n-1}(\{x \in \partial K|r_{K}^{-\alpha}(x) > s\})ds
\nonumber\\
&&=\int_{0}^{1}{\mathcal H}_{n-1}(\{x \in \partial K|r_{K}^{-\alpha}(x) > s\})ds
+\int_{1}^{\infty}{\mathcal H}_{n-1}(\{x \in \partial K|r_{K}^{-\alpha}(x) > s\})ds
\nonumber\\
&&\leq\operatorname{vol}_{n-1}(\partial K)
+\operatorname{vol}_{n-1}(\partial K)\int_{1}^{\infty}(n-1)
s^{-\frac{1}{\alpha}}\lambda^{-1}
ds \nonumber \\
&&=\operatorname{vol}_{n-1}(\partial K)\left(1+\frac{n-1}{\lambda}
\left(\frac{1}{\alpha}-1\right)\right).
\end{eqnarray}

\noindent
The next lemma is the analogue of Lemma \ref{SW-lemma} in the present context. The definition of $r_\psi(x)$ is given in (\ref{r(x)}).

\begin{lemma}\label{integral}
 Let  $\psi: \R^n \rightarrow \R $ be a  convex function such that  
$ e^{-\psi}$ is integrable. Then we have for all $0 \leq \alpha < 1$, 
\begin{equation}\label{integral-1}
\int _{\mathbb R^{n}} \frac{(1+\|\nabla \psi (x)\|^2)^{\frac{1}{2}}} { r_\psi(x) ^\alpha} \  e^{-\psi(x)} \   dx  < \infty.
\end{equation}
In particular, this holds for $\alpha = \frac{n}{n+2}$. 
\end{lemma}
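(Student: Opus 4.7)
\medskip

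\noindent\textbf{Proof plan.}
The plan is to transfer the integral from $\mathbb{R}^n$ to the boundary of the epigraph using the surface area element $d\mu_{\operatorname{epi}(\psi)}(z)=(1+\|\nabla\psi(x)\|^{2})^{1/2}\,dx$ at $z=(x,\psi(x))$, so that by the definition $r_\psi(x)=r_{\operatorname{epi}(\psi)}(z)$ the integral in (\ref{integral-1}) becomes
$$
 I \;=\; \int_{\partial \operatorname{epi}(\psi)} \frac{e^{-\langle z,e_{n+1}\rangle}}{r_{\operatorname{epi}(\psi)}(z)^{\alpha}}\,d\mu_{\operatorname{epi}(\psi)}(z).
$$
I would then use $e^{-\langle z,e_{n+1}\rangle}=\int_{\langle z,e_{n+1}\rangle}^{\infty} e^{-t}\,dt$ together with Fubini to rewrite
$$
 I \;=\; \int_{m}^{\infty} e^{-t}\int_{G_t} r_{\operatorname{epi}(\psi)}(z)^{-\alpha}\,d\mu_{\operatorname{epi}(\psi)}(z)\,dt,
$$
where $m=\inf\psi$ (finite by Lemma \ref{integrate+limit}) and $G_t=\{z\in\partial\operatorname{epi}(\psi):\langle z,e_{n+1}\rangle\le t\}$.

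For each $t>m$, the key step is to introduce the bounded convex body $E_t=\operatorname{epi}(\psi)\cap\{y\le t\}\subset\mathbb{R}^{n+1}$, whose boundary decomposes as $\partial E_t=G_t\cup(K_t\times\{t\})$ with $K_t=\{\psi\le t\}$. Since $E_t\subseteq\operatorname{epi}(\psi)$ and the two sets share the outer normal at every smooth point of $G_t$, any inscribed ball in $E_t$ touching at such $z$ is also inscribed in $\operatorname{epi}(\psi)$, whence $r_{E_t}(z)\le r_{\operatorname{epi}(\psi)}(z)$ and therefore
$$
 \int_{G_t} r_{\operatorname{epi}(\psi)}^{-\alpha}\,d\mu \;\le\; \int_{\partial E_t} r_{E_t}^{-\alpha}\,d\mu \;\le\; \operatorname{vol}_n(\partial E_t)\Bigl(1+\tfrac{n}{\lambda_t}\bigl(\tfrac{1}{\alpha}-1\bigr)\Bigr)
$$
by (\ref{RollFEst1}), with $\lambda_t$ the inradius of $E_t$.

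I would fix $t_0>m$ so that $E_{t_0}$ contains a Euclidean ball of radius $\lambda_0>0$ (possible since $\psi$ attains its minimum by coercivity (\ref{coercive})); then $\lambda_t\ge\lambda_0$ for all $t\ge t_0$, making the factor $(1+n(1/\alpha-1)/\lambda_t)$ uniformly bounded on $[t_0,\infty)$. Splitting $\operatorname{vol}_n(\partial E_t)=\operatorname{vol}_n(K_t)+\int_{K_t}(1+\|\nabla\psi\|^{2})^{1/2}\,dx$, the coercivity bound (\ref{levelcoercive}) yields $K_t\subset B_2^n(0,(t-\beta)/\gamma)$, hence $\operatorname{vol}_n(K_t)\le C t^n$ and $\int_{t_0}^{\infty}e^{-t}t^{n}\,dt<\infty$. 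For the graph contribution, Fubini swaps the order of integration:
$$
 \int_{t_0}^{\infty} e^{-t}\int_{K_t}(1+\|\nabla\psi\|^{2})^{1/2}\,dx\,dt \;\le\; \int_{\mathbb{R}^n}(1+\|\nabla\psi(x)\|^{2})^{1/2}\,e^{-\psi(x)}\,dx,
$$
which is finite by Lemma \ref{lemma:psi->grad}. The remaining range $t\in[m,t_0]$ is handled by the nested inclusion $G_t\subset G_{t_0}$, reducing it to the bound $\int_{\partial E_{t_0}}r_{E_{t_0}}^{-\alpha}\,d\mu<\infty$ from Lemma \ref{SW-lemma}.

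The apparent obstacle is that $\operatorname{vol}_n(G_t)$ can grow exponentially in $t$, which would seem to defeat the decay $e^{-t}$; this is exactly what is resolved by the Fubini exchange, which trades the $t$-integral for the weighted gradient integral whose finiteness is guaranteed by Lemma \ref{lemma:psi->grad}. The case $\alpha=0$ is trivial since then $I$ reduces directly to the integral appearing in that lemma.
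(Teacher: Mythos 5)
Your argument is correct, and it takes a genuinely different route from the paper's. You slice continuously: writing $e^{-\psi(x)}=\int_{\psi(x)}^{\infty}e^{-t}dt$ and using Tonelli, you reduce everything to the bounded sublevel bodies $E_t=\operatorname{epi}(\psi)\cap\{y\le t\}$, whose boundary has just one flat facet $K_t\times\{t\}$ (controlled polynomially in $t$ via the coercivity bound (\ref{levelcoercive}) and killed by $e^{-t}$) plus the graph part, which a second Tonelli exchange turns back into $\int(1+\|\nabla\psi\|^2)^{1/2}e^{-\psi}dx$, finite by Lemma \ref{lemma:psi->grad}; the inradius of $E_t$ is monotone in $t$, so the factor from (\ref{RollFEst1}) is uniform for $t\ge t_0$, and the range $[m,t_0]$ is absorbed by monotonicity of $G_t$ and Lemma \ref{SW-lemma}. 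The paper instead slices dyadically in height, works with the slabs $A_k=\operatorname{epi}(\psi)\cap\{2^k\le t\le 2^{k+1}\}$, and must compare $\operatorname{vol}_n(\partial A_k)$ with the lateral graph area $\operatorname{vol}_n(\Gamma_k)$, which it does through a Schwarz symmetrization argument (inequalities (\ref{integral-5})--(\ref{integral-66})), before summing against $e^{-2^k}\le e^{-\psi/2}$ and invoking Lemma \ref{lemma:psi->grad} at weight $e^{-\psi/2}$. Both proofs rest on the same pillars — monotonicity of the rolling function under inclusion of convex sets with common normal, the rolling-function estimate (\ref{RollFEst1}) for a body with an inradius lower bound, and coercivity from Lemma \ref{integrate+limit} — but your continuous decomposition dispenses with the symmetrization step and with the separate verification that each slab contains a ball of fixed radius, at the modest cost of checking the measurability needed for the two Tonelli exchanges and the measure-zero caveats at rim points and at boundary points with non-unique normal (where $r=0$), which you should state explicitly but which cause no harm since those sets are $\mu$-null.
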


\vskip 3mm
\begin{proof} 
By Lemma \ref{integrate+limit}, there are $\gamma$ and $\beta$
such that for all $x\in\mathbb R^{n}$
\begin{equation}\label{integral-99}
\psi(x)\geq\gamma\|x\|_{}+\beta.
\end{equation}
We can assume that $\alpha >0$. The case $\alpha =0$ follows by Lemma \ref{lemma:psi->grad}. 
\par
We first assume that $\psi(x)<\infty$ for all $x\in\mathbb R^{n}$.
For $k \in \mathbb{N}$, let 
$$
E_k(\psi) =\{x \in \Omega_\psi: \psi(x) \leq 2^{k} \}.
$$ 
As $\psi$ is convex on $\mathbb R^{n}$,
 it is continuous on $\mathbb R^{n}$. Therefore the sets $E_k(\psi)$ are convex and closed.  
By Lemma \ref{integrate+limit} the sets $E_k(\psi)$  are
bounded.  For $k \in \mathbb{N}$ we put
$$
K_k = \{(x, y)  \in \mathbb{R}^{n} \times  \mathbb{R}:   \psi(x) \leq   y \leq 2^{k}\} 
$$
and 
\begin{equation}\label{integral-3}
A_{k}
=\operatorname{epi}(\psi)
\cap\{(x,t)\in\mathbb R^{n+1}:2^{k}\leq t\leq 2^{k+1}\}.
\end{equation}
Let $k_{0} \in \mathbb{N}$ be the smallest number such that
$K_{k_{0}}$ contains an $(n+1)$-dimensional Euclidean ball and such that
\begin{equation}\label{integral-112}
|\beta|\leq2^{k_{0}-1}.
\end{equation} 
We denote its radius by $\lambda$.
We may assume that $\lambda\leq 1$.
Thus, by convexity, for $k\geq k_{0}$ the sets
$A_{k}$ 
contain an $(n+1)$-dimensional Euclidean ball with
radius $\lambda$. Then
\begin{eqnarray*}
&&\int _{\mathbb R^{n}} \frac{(1+\|\nabla \psi (x)\|^2)^{\frac{1}{2}}} { r_\psi(x) ^\alpha} \  e^{-\psi(x)} \   dx    \\
&& =\int _{E_{k_0} (\psi)} \frac{(1+\|\nabla \psi (x)\|^2)^{\frac{1}{2}}} { r_\psi(x) ^\alpha} \  e^{-\psi(x)} \   dx   + \int _{(E_{k_0}(\psi))^C} \frac{(1+\|\nabla \psi (x)\|^2)^{\frac{1}{2}}} { r_\psi(x) ^\alpha} \  e^{-\psi(x)} \   dx.
\end{eqnarray*}
By (\ref{RollFEst1}), (\ref{integral-99}) and as
$r_{K_{k_{0}}}(x,\psi(x))\leq r_{\psi}(x)$ for all $x\in E_{k_{0}}$
\begin{eqnarray*}
&& \int _{E_{k_0} (\psi)} \frac{(1+\|\nabla \psi (x)\|^2)^{\frac{1}{2}}} { r_\psi(x) ^\alpha} \  e^{-\psi(x)} \   dx  \\
&&\leq  e^{-\beta} \int _{\partial K_{k_0}}  \frac{ d \mu _{K_{k_0}} (z) }{r_{K_{k_0}}(z)^\alpha}  
\leq e^{-\beta}\operatorname{vol}_{n}(\partial K_{k_{0}})\left(1+\frac{n}{\lambda}\left(\frac{1}{\alpha}-1\right)\right)    \\
&&\leq2 e^{-\beta}\left(1+\frac{n}{\lambda}\left(\frac{1}{\alpha}-1\right)\right)
\operatorname{vol}_{n}(\{(x,\gamma\|x\|+\beta):\ \gamma\|x\|+\beta\leq 2^{k_{0}}\}).
\end{eqnarray*}
The convex set $\{(x,\gamma\|x\|+\beta):\ \gamma\|x\|+\beta\leq 2^{k_{0}}\}$ is contained in the convex cylinder of height $2^{k_{0}}$ and radius
$\frac{2^{k_{0}}-\beta}{\gamma}$. By (\ref{integral-112}),
$\frac{2^{k_{0}}-\beta}{\gamma}\geq\frac{2^{k_{0}}}{\gamma}$.
Therefore, the surface area of the first set
is smaller than the surface area of this cylinder. Thus
\begin{eqnarray}\label{integral-98}
&& \int _{E_{k_0} (\psi)} \frac{(1+\|\nabla \psi (x)\|^2)^{\frac{1}{2}}} { r_\psi(x) ^\alpha} \  e^{-\psi(x)} \   dx \leq \\
&&4 e^{-\beta}\left(1+\frac{n}{\lambda}\left(\frac{1}{\alpha}-1\right)\right)
\left(\left(\frac{2^{k_{0}}-\beta}{\gamma}\right)^{n}\operatorname{vol}_{n}(B_{2}^{n})
+2^{k_{0}}\left(\frac{2^{k_{0}}-\beta}{\gamma}\right)^{n-1}\operatorname{vol}_{n-1}(\partial B_{2}^{n})\right),
\nonumber
\end{eqnarray}
which is finite. 
Let $G(\psi)=\{ (x, \psi(x)): x \in \mathbb{R}^n\}$ be the graph of $\psi$ and 
let 
\begin{equation}\label{integral-2}
\Gamma_k = G(\psi) \cap  \{y \in  \mathbb{R}^{n+1}: 2^{k}\leq  y_{n+1}\leq 2^{k+1} \} .
\end{equation}
We denote by $P$ the orthogonal projection onto $\mathbb R^{n}$.
Then  
\begin{eqnarray}\label{RollFEst2}
&&\int _{(E_{k_0}(\psi))^C} \frac{(1+\|\nabla \psi (x)\|^2)^{\frac{1}{2}}} { r_\psi(x) ^\alpha} \  e^{-\psi(x)} \   dx  
\leq \sum_{k=k_0}^ \infty  e^{-2^{k}}  \int_{P( \Gamma_k)}  \frac{ (1+\|\nabla \psi (x)\|^2)^{\frac{1}{2}}}{r_\psi(x)^\alpha} dx \nonumber \\
&& \leq \sum_{k=k_0}^ \infty  e^{-2^{k}} \int_{ \Gamma_k}   \frac{d \mu_{\Gamma_k}(z)}{r_{\Gamma_{k}}(z)^\alpha} 
\leq \sum_{k=k_0}^ \infty  e^{-2^{k}} \int_{\partial A_k}   \frac{d \mu_{A_k}(z)}{r_{A_{k}}(z)^\alpha} 
\nonumber  \\
&&\leq  \sum_{k=k_0}^ \infty  e^{-2^{k}} \operatorname{vol}_{n}(\partial A_{k})\left(1+
\frac{n}{\lambda}\left(\frac{1}{\alpha}-1\right)\right) .
\end{eqnarray}
The last inequality follows by (\ref{RollFEst1}), as $A_{k}$ contains a ball of radius $\lambda$.
\par
Recall that $H(x,\xi)$ denotes the hyperplane through $x$ and orthogonal
to $\xi$. Then
\begin{equation}\label{integral-4}
\partial A_{k}
=\Gamma_{k}\cup
(\operatorname{epi}(\psi)\cap H(2^{k}e_{n+1},e_{n+1}))
\cup(\operatorname{epi}(\psi)\cap H(2^{k+1}e_{n+1},e_{n+1})).
\end{equation}
We shall show that there is a constant $\alpha_{n}$ such that 
for all $k \geq k_0$,
\begin{equation}\label{integral-5}
\operatorname{vol}_{n}(\partial A_{k})
\leq \alpha_{n}\operatorname{vol}_{n}(\Gamma_{k}).
\end{equation}
As above
\begin{eqnarray}\label{integral-6}
\operatorname{vol}_{n}
(\operatorname{epi}(\psi)\cap H(2^{k}e_{n+1},e_{n+1}))
&\leq&\left(\frac{2^{k}-\beta}{\gamma}\right)^{n}\operatorname{vol}_{n}(B_{2}^{n})  \\
\operatorname{vol}_{n}
(\operatorname{epi}(\psi)\cap H(2^{k+1}e_{n+1},e_{n+1}))
&\leq&\left(\frac{2^{k+1}-\beta}{\gamma}\right)^{n}\operatorname{vol}_{n}(B_{2}^{n}).
\nonumber
\end{eqnarray}
To show (\ref{integral-5}), it is enough to show
\begin{eqnarray}\label{integral-7}
\operatorname{vol}_{n}(
\operatorname{epi}(\psi)\cap H(2^{k}e_{n+1},e_{n+1}))
&\leq& \alpha_{n}\operatorname{vol}_{n}(\Gamma_{k})  \\
\operatorname{vol}_{n}(
\operatorname{epi}(\psi)\cap H(2^{k+1}e_{n+1},e_{n+1}))
&\leq& \alpha_{n}\operatorname{vol}_{n}(\Gamma_{k}).
\nonumber
\end{eqnarray}
To do so, we apply the Schwarz symmetrization (see e.g. \cite{GardnerBook,SchneiderBook}) with axis $e_{n+1}$
to $A_{k}$. Then there is a rotationally invariant function 
$\tilde\psi:\mathbb R^{n}\to\mathbb R$ such that
\begin{equation}\label{integral-8}
\operatorname{Schw}(A_{k})
=\operatorname{epi}(\tilde \psi)
\cap\{(x,t)\in\mathbb R^{n+1}:2^{k}\leq t\leq 2^{k+1}\}.
\end{equation}
Let
$$
\tilde \Gamma_{k}
=G(\tilde \psi)\cap \{(x,t)\in\mathbb R^{n+1}:2^{k}\leq t\leq 2^{k+1}\}.
$$
Observe that
\begin{eqnarray}\label{integral-10}
&&\partial\operatorname{Schw}(A_{k})   \\
&&=\tilde \Gamma_{k}
\cup
(\operatorname{epi}(\tilde\psi)\cap H(2^{k}e_{n+1},e_{n+1}))
\cup(\operatorname{epi}(\tilde\psi)\cap H(2^{k+1}e_{n+1},e_{n+1})).
\nonumber
\end{eqnarray}
There are radii $r_{k}$ and $R_{k}$ with
\begin{eqnarray*}
\operatorname{epi}(\tilde\psi)\cap H(2^{k}e_{n+1},e_{n+1})
&=&B_{2}^{n}(2^{k}e_{n+1},r_{k})\cap H(2^{k}e_{n+1},e_{n+1})
\\
\operatorname{epi}(\tilde\psi)\cap H(2^{k+1}e_{n+1},e_{n+1})
&=&B_{2}^{n}(2^{k+1}e_{n+1},R_{k})\cap H(2^{k+1}e_{n+1},e_{n+1}).
\end{eqnarray*}
We have
\begin{eqnarray}\label{integral-14}
&&\operatorname{vol}_{n}(\operatorname{epi}(\psi)\cap H(2^{k}e_{n+1},e_{n+1}))   \\
&&=\operatorname{vol}_{n}(
\operatorname{epi}(\tilde\psi)\cap H(2^{k}e_{n+1},e_{n+1}))
=r_{k}^{n}\operatorname{vol}_{n}(B_{2}^{n})   
\nonumber
\end{eqnarray}
and
\begin{eqnarray}\label{integral-15}
&&\operatorname{vol}_{n}(\operatorname{epi}(\psi)\cap H(2^{k+1}e_{n+1},e_{n+1}))   \\
&&=\operatorname{vol}_{n}(
\operatorname{epi}(\tilde\psi)\cap H(2^{k+1}e_{n+1},e_{n+1}))
=R_{k}^{n}\operatorname{vol}_{n}(B_{2}^{n}) .
\nonumber
\end{eqnarray}
From the above considerations it follows that
\begin{equation}\label{integral-16}
\operatorname{vol}_{n}(\tilde \Gamma_{k})
\leq\operatorname{vol}_{n}( \Gamma_{k}).
\end{equation}
Indeed, since a Schwarz symmetrization reduces the surface area 
of a convex body
$$
\operatorname{vol}_{n}(\partial\operatorname{Schw}(A_{k}))
\leq\operatorname{vol}_{n}(\partial(A_{k}))
$$
and thus by (\ref{integral-4}) and (\ref{integral-10}) and as the unions are disjoint
\begin{eqnarray*}
&&\operatorname{vol}_{n}(\tilde\Gamma_{k})+
\operatorname{vol}_{n}(\operatorname{epi}(\tilde\psi)\cap H(2^{k}e_{n+1},e_{n+1})))+
\operatorname{vol}_{n}(\operatorname{epi}(\tilde \psi)\cap H(2^{k+1}e_{n+1},e_{n+1})))  \\
&&\leq\operatorname{vol}_{n}(\Gamma_{k})+
\operatorname{vol}_{n}(\operatorname{epi}(\psi)\cap H(2^{k}e_{n+1},e_{n+1})))+
\operatorname{vol}_{n}(\operatorname{epi}(\psi)\cap H(2^{k+1}e_{n+1},e_{n+1}))) .
\end{eqnarray*}
By (\ref{integral-14}) and (\ref{integral-15}), the inequality 
(\ref{integral-16}) follows.
\par
We show now that for some constant $b_{n}$
\begin{equation}\label{integral-66}
b_{n}\frac{R_{k}^{n}}{2^{n}}\operatorname{vol}_{n}( B_{2}^{n})
\leq
\operatorname{vol}_{n}(\tilde\Gamma_{k}).
\end{equation}
For this we show
\begin{equation}\label{integral-17}
(R_{k}^{n}-r_{k}^{n})\operatorname{vol}_{n}( B_{2}^{n})
\leq\operatorname{vol}_{n}(\tilde\Gamma_{k})
\end{equation}
and
\begin{equation}\label{integral-18}
r_{k}^{n-1}\operatorname{vol}_{n-1}(\partial B_{2}^{n}) 2^{k}
\leq\operatorname{vol}_{n}(\tilde\Gamma_{k}).
\end{equation}
To prove (\ref{integral-17}) we observe
$$
\operatorname{vol}_{n}(\tilde\Gamma_{k})
\geq\operatorname{vol}_{n}(\{x:r_{k}\leq x\leq R_{k}\}).
$$
To prove (\ref{integral-18}) we observe that the surface area
of the cylinder (without bottom and top)
$$
\{(x,t)\in\mathbb R^{n+1}:2^{k}\leq t\leq 2^{k+1}, \|x\|_{2}=r_{k}\}
$$
is less than the surface area of $\tilde\Gamma_{k}$. In order to prove
(\ref{integral-66}) we consider two cases. The first case is
$r_{k}\leq\frac{R_{k}}{2}$. By (\ref{integral-17}),
$$
\left(1-\frac{1}{2^{n}}\right)R_{k}^{n}
\operatorname{vol}_{n}( B_{2}^{n})
\leq\operatorname{vol}_{n}(\tilde\Gamma_{k}).
$$
The second case is $r_{k}\geq\frac{R_{k}}{2}$. By (\ref{integral-18}),
$$
\operatorname{vol}_{n}(\tilde\Gamma_{k})
\geq
r_{k}^{n-1}\operatorname{vol}_{n-1}(\partial B_{2}^{n}) 2^{k}
\geq\frac{R_{k}^{n-1}}{2^{n-1}}\operatorname{vol}_{n-1}(\partial B_{2}^{n}) 2^{k}.
$$
By (\ref{integral-6}) and (\ref{integral-15})
$$
\left(\frac{2^{k+1}-\beta}{\gamma}\right)^{n}\operatorname{vol}_{n}(B_{2}^{n})
\geq
\operatorname{vol}_{n}
(\operatorname{epi}(\psi)\cap H(2^{k+1}e_{n+1},e_{n+1}))
=R_{k}^{n}\operatorname{vol}_{n}(B_{2}^{n}).
$$
Therefore
$$
\frac{2^{k+1}-\beta}{\gamma}\geq R_{k}
$$
and consequently, by (\ref{integral-112})
\begin{eqnarray*}
\operatorname{vol}_{n}(\tilde\Gamma_{k})
&\geq&\frac{R_{k}^{n-1}}{2^{n-1}}
\operatorname{vol}_{n-1}(\partial B_{2}^{n}) 2^{k}
\geq \frac{R_{k}^{n}}{2^{n}}
\operatorname{vol}_{n-1}(\partial B_{2}^{n})\frac{\gamma}{1-\frac{\beta}{2^{k+1}}}
\\
&\geq& \frac{R_{k}^{n}}{2^{n}}
\operatorname{vol}_{n-1}(\partial B_{2}^{n})\frac{\gamma}{1+\frac{|\beta|}{2^{k+1}}}
\geq\frac{4\gamma}{5}\frac{R_{k}^{n}}{2^{n}}
\operatorname{vol}_{n-1}(\partial B_{2}^{n}).
\end{eqnarray*}
Thus we have shown (\ref{integral-66}). By (\ref{integral-4})
\begin{eqnarray*}
&&\operatorname{vol}_{n}(\partial(A_{k}))  \\
&&=
\operatorname{vol}_{n}(\Gamma_{k})+
\operatorname{vol}_{n}(
(\operatorname{epi}(\psi)\cap H(2^{k}e_{n+1},e_{n+1}))
+\operatorname{vol}_{n}(\operatorname{epi}(\psi)\cap H(2^{k+1}e_{n+1},e_{n+1})).
\end{eqnarray*}
By (\ref{integral-14}), (\ref{integral-15}), (\ref{integral-16}) and (\ref{integral-66})
$$
\operatorname{vol}_{n}(\partial(A_{k}))
\leq\operatorname{vol}_{n}(\Gamma_{k})
+r_{k}^{n}\operatorname{vol}_{n}(B_{2}^{n})+R_{k}^{n}\operatorname{vol}_{n}(B_{2}^{n})
\leq\operatorname{vol}_{n}(\Gamma_{k})
+\frac{2^{n+1}}{b_{n}}\operatorname{vol}_{n}(\Gamma_{k}),
$$
which shows (\ref{integral-5}).
By (\ref{RollFEst2}) and (\ref{integral-5})
\begin{eqnarray*}
&&\int _{(E_{k_0}(\psi))^c} \frac{(1+\|\nabla \psi (x)\|^2)^{\frac{1}{2}}} { r_\psi(x) ^\alpha} \  e^{-\psi(x)} \   dx  
\leq  \alpha_{n}\sum_{k=k_0}^ \infty  e^{-2^{k}} 
\operatorname{vol}_{n}(\Gamma_{k})\left(1+\frac{n}{\lambda}\left(\frac{1}{\alpha}-1\right)\right) \\
&&=  \alpha_{n}\left(1+\frac{n}{\lambda}\left(\frac{1}{\alpha}-1\right)\right)
\sum_{k=k_0}^ \infty  e^{-2^{k}} \int_{E_{k+1}\setminus E_{k}}
(1+\|\nabla \psi (x)\|^2)^{\frac{1}{2}}dx   .
\end{eqnarray*}
Since $2^{k}\leq\psi(x)\leq2^{k+1}$ on $E_{k+1}\setminus E_{k}$, we have
$\frac{\psi(x)}{2}\leq 2^{k}$. Therefore
\begin{eqnarray*}
&&\int _{(E_{k_0}(\psi))^c} \frac{(1+\|\nabla \psi (x)\|^2)^{\frac{1}{2}}} { r_\psi(x) ^\alpha} \  e^{-\psi(x)} \   dx  \\
&&\leq  \alpha_{n}\left(1+\frac{n}{\lambda}\left(\frac{1}{\alpha}-1\right)\right)
\sum_{k=k_0}^ \infty   \int_{E_{k+1}\setminus E_{k}}
(1+\|\nabla \psi (x)\|^2)^{\frac{1}{2}}e^{-\frac{\psi(x)}{2}}dx  \\
&&=  \alpha_{n}\left(1+\frac{n}{\lambda}\left(\frac{1}{\alpha}-1\right)\right)
   \int_{(E_{k_0}(\psi))^c}
(1+\|\nabla \psi (x)\|^2)^{\frac{1}{2}}e^{-\frac{\psi(x)}{2}}dx  \\
&&\leq  \alpha_{n}\left(1+\frac{n}{\lambda}\left(\frac{1}{\alpha}-1\right)\right)
   \int_{\mathbb R^{n}}
(1+\|\nabla \psi (x)\|^2)^{\frac{1}{2}}e^{-\frac{\psi(x)}{2}}dx .
\end{eqnarray*}
As in the proof of Lemma \ref{lemma:psi->grad}, we get
\begin{eqnarray*}
&&\int _{(E_{k_0}(\psi))^c} \frac{(1+\|\nabla \psi (x)\|^2)^{\frac{1}{2}}} { r_\psi(x) ^\alpha} \  e^{-\psi(x)} \   dx  \\
&&\leq \alpha_{n}\left(1+\frac{n}{\lambda}\left(\frac{1}{\alpha}-1\right)\right)\left(
\int_{\mathbb R^{n}}
e^{-\frac{\psi(x)}{2}}dx 
+4e^{-\frac{\beta}{2}}\operatorname{vol}_{n-2}(\partial B_{2}^{n-1})
n\left(\frac{2}{\gamma}\right)^{n-1}\Gamma(n-1)\right)
\end{eqnarray*}
and thus
\begin{equation}\label{integral-117}
\int _{\mathbb R^{n}} \frac{(1+\|\nabla \psi (x)\|^2)^{\frac{1}{2}}} { r_\psi(x) ^\alpha} \  e^{-\psi(x)} \   dx
\leq c(\beta,\gamma,n,k_{0}).
\end{equation}
\end{proof}

\vskip 5mm
The proof of Theorem  \ref{theo:f-deltafloat} and Corollary \ref{corollary}  follows immediately from these lemmas.
\par
\begin{proof}[Proof of Theorem \ref{theo:f-deltafloat}]
By the assumptions of the theorem, Lemmas \ref{lemma:interchange}, \ref{integral} and Lebesgue's Dominated Convergence Theorem we get that 
\begin{eqnarray*}
\lim _{\delta \rightarrow 0} \frac{ \int (f (x) - f_\delta(x) )  dx} {\delta^{2/(n+2)}} =
 \int \lim _{\delta \rightarrow 0} \frac{(f(x) - f_\delta(x) ) \ dx} {\delta^{2/(n+2)}} 
\end{eqnarray*}
Lemma \ref{lemma:f-delta} finishes the proof.

\end{proof}
\vskip 3mm
\begin{proof}[Proof of Corollary  \ref{corollary}]
The proof is  done in the same way using Lemmas \ref{lemma:f-delta},  \ref{lemma:interchange},    \ref{integral} and Lebesgue's Dominated Convergence Theorem.
\end{proof}

\vskip 3mm
\noindent
{\bf Acknowledgement}
\par
\noindent
This material is based upon work supported by the National Science Foundation under Grant No. DMS-1440140 while the  authors were in residence at the Mathematical Sciences Research Institute in Berkeley, California, during the Fall 2017 semester.  \\
We  want to thank the referee
for the careful reading and suggestions for improvement.

{}

\vskip 4mm
\noindent
Ben Li\\
{\small Department of Mathematics}\\
{\small Case Western Reserve University}\\
{\small Cleveland, Ohio 44106, U. S. A. }\\
{\small \tt bxl292@case.edu}\\ \\
\vskip 3mm
\noindent
Carsten Sch\"utt\\
{\small Mathematisches Seminar }\\
{\small Christian-Albrechts Universit\"at}\\
{\small 24098 Kiel, Germany }\\
{\small \tt schuett@math.uni-kiel.de}\\ \\
\vskip 3mm
\noindent
Elisabeth M. Werner\\
{\small Department of Mathematics \ \ \ \ \ \ \ \ \ \ \ \ \ \ \ \ \ \ \ Universit\'{e} de Lille 1}\\
{\small Case Western Reserve University \ \ \ \ \ \ \ \ \ \ \ \ \ UFR de Math\'{e}matique }\\
{\small Cleveland, Ohio 44106, U. S. A. \ \ \ \ \ \ \ \ \ \ \ \ \ \ \ 59655 Villeneuve d'Ascq, France}\\
{\small \tt elisabeth.werner@case.edu}\\ \\

\end{document}